\documentclass[a4paper, 10pt]{amsart}

\usepackage{amsmath, latexsym,  amssymb, amscd}
\usepackage[all]{xy}
\usepackage[british]{babel}
\usepackage{url}

\setlength{\parindent}{0.3cm}
\usepackage{enumitem}

\usepackage{hyperref}

\renewcommand{\epsilon}{\varepsilon}

\DeclareMathOperator{\codim}{codim}
\DeclareMathOperator{\cod}{codim}

\DeclareMathOperator{\emor}{End}

\newcommand{\abs}[1]{\left| #1 \right|}

\renewcommand{\P}{\mathbb{P}}

\newcommand{\sotto}{B}

\newcommand{\Mm}{\mathcal{M}}

\newcommand{\qe}{\mathbb{Q}}
\newcommand{\C}{\mathbb{C}}
\newcommand{\R}{\mathbb{R}}
\newcommand{\Z}{\mathbb{Z}}

\newcommand{\Q}{\mathbb{Q}}

\newcommand{\Ci}{\mathcal{C}}

\newcommand{\cuno}{c_1(N)}
\newcommand{\cdue}{c_2}
\newcommand{\cquindici}{c_{3}}
\newcommand{\ccinque}{c_4}
\newcommand{\ctre}{c_5}
\newcommand{\cquattro}{c_6}
\newcommand{\csei}{c_7}
\newcommand{\csette}{c_8}
\newcommand{\cotto}{c_9}
\newcommand{\cdieci}{c_{10}}
\newcommand{\cnove}{c_{11}}
\newcommand{\cundici}{c_{12}}

\newcommand{\csedici}{c_{3}}
\newcommand{\cdiciassette}{c_{16}}
\newcommand{\cdiciotto}{c_{17}}


\newtheorem{thm}{Theorem}[section]

\newtheorem*{con}{Conjecture}
\newtheorem{propo}[thm]{Proposition}
\newtheorem{lem}[thm]{Lemma}
\newtheorem{cor}[thm]{Corollary}
\newtheorem{D}[thm]{Definition}

\newtheorem{remark}[thm]{Remark}


\title[]{On the explicit torsion anomalous conjecture}
\author{S. Checcoli, F. Veneziano, E. Viada}
\begin{document}

\thanks{This paper has been accepted for publication on Transactions of the American Mathematical Society}
\keywords{Mordell Conjecture, Rational points, Subvarieties of products of elliptic curves, Diophantine approximation}
\subjclass[2010]{11G50, 14G05, 14G40}

\begin{abstract}
The Torsion Anomalous Conjecture states that an irreducible variety $V$ embedded in a semi-abelian variety contains only finitely many maximal $V$-torsion anomalous varieties. In this paper we consider an irreducible variety embedded in a produc of elliptic curves. Our main result provides a totally explicit  bound for the N\'eron-Tate height of all maximal $V$-torsion anomalous points of relative codimension one, in the  non CM case, and an analogous effective result in the CM case. 
As an application, we obtain the  finiteness of such points. In addition, we deduce some new explicit results in the context of the effective Mordell-Lang Conjecture; in particular we bound the N\'eron-Tate height of the rational points of an explicit family of curves of increasing genus.
\end{abstract}
\maketitle

\section{Introduction}

 In this article, by \emph{variety} we mean an algebraic variety defined over the algebraic numbers. Equivalently, a variety $X$ is  defined by polynomials with coefficients in a number field $k$, however $k$ will not play any  role in our theorems. In addition, we identify $X=X(\overline{\qe})$.

Let $G$ be a semi-abelian variety.

A subvariety $V\subseteq G$  is a \emph{translate},  respectively  a \emph{torsion variety},  if it is a finite  union of translates of  proper algebraic subgroups  of $G$ by  points, respectively  by torsion points.

An irreducible variety $V\subseteq G$  is \emph{transverse}, respectively \emph{weak-transverse}, if  it is not contained in any translate, respectively in any torsion variety.

\medskip

Many important classical  results such as, for instance, the Manin-Mumford, the Mordell-Lang, the Bogomolov Conjectures, nowadays theorems, and  many open  problems, such as the Zilber-Pink Conjecture, investigate the relationship between these geometrical definitions and the arithmetical properties of the variety $V$.

Recently E. Bombieri, D. Masser and U. Zannier in \cite{BMZ1}  introduced the  notions of anomalous and torsion anomalous varieties and formulated some general conjectures. 
 Following their work, we  give the definition of torsion anomalous varieties. However, unlike \cite{BMZ1}, we allow torsion anomalous varieties to be zero dimensional.  Like this, we can simplify the formulation of several statements.

\medskip
 Let $V$ be a subvariety of a semi-abelian variety $G$. We say that
 an irreducible subvariety $Y$ of $V$ is a \emph{$V$-torsion anomalous} variety if 
\begin{enumerate}
\item[(i)] $Y$ is an irreducible component of $V\cap (B+\zeta)$ with $B+\zeta$ an irreducible  torsion variety of $G$;
\item[(ii)]  the dimension of $Y$ is larger than expected, i.e.  the codimensions satisfy  $${\mathrm {codim}\,} Y < {\mathrm {codim}\,} V +  {\mathrm {codim}\,}  B.$$
\end{enumerate}

We say that $B+\zeta$ is \emph{minimal} for $Y$ if, in addition, it has minimal dimension. The codimension of $Y$ in its minimal $B+\zeta$ is called the \emph{relative codimension} of $Y$.

We also say that a $V$-torsion anomalous variety $Y$ is \emph{maximal}  if it is not contained in any $V$-torsion anomalous variety of strictly larger dimension. 

\medskip

\medskip

The Torsion Anomalous Conjecture (TAC) is  a natural variant of a conjecture by Bombieri, Masser and Zannier.

\begin{con}[TAC] An  irreducible  subvariety  $V$  of a  semi-abelian variety  contains only finitely many maximal $V$-torsion anomalous varieties.\end{con}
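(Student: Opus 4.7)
The plan is to split TAC into two largely independent finiteness problems --- one for positive-dimensional maximal $V$-torsion anomalous varieties, one for maximal $V$-torsion anomalous points --- and to attack each with a different circle of ideas, while carefully maintaining uniformity as the torsion variety $B+\zeta$ varies. To start, I would stratify the $V$-torsion anomalous locus by the pair $(\dim B, \dim Y)$ with $B+\zeta$ minimal for $Y$. The anomaly inequality $\codim Y < \codim V + \codim B$ leaves only finitely many admissible strata, so it suffices to bound each one. A crucial structural input is that for a \emph{maximal} anomalous $Y$, the minimal $B$ is essentially controlled by $\stab(Y)$ and by the way $V$ meets $B$-translates; this should reduce the a priori infinite parameter space of torsion varieties to finite algebraic data attached to $V$.

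For positive-dimensional $Y$, I would exploit the quotient map $\pi\colon G \to G/\stab(Y)$. The image $\pi(Y)$ is a point, in fact a torsion point of $G/\stab(Y)$, and from the minimality of $B$ one shows that $\pi(Y)$ lies in $\pi(V)$. Finiteness of such torsion points is then a consequence of the Manin-Mumford theorem (Raynaud) applied to $\pi(V)$, provided one also has a bound on the number of relevant quotients; this is in turn controlled by the finite lattice of algebraic subgroups of $G$ that interact nontrivially with $V$ in the prescribed dimensional range.

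For $V$-torsion anomalous points, the approach is Northcott: a uniform upper bound on the N\'eron-Tate height $\hat h(Y)$ in terms of $h(V)$ and $\deg V$, obtained from an arithmetic B\'ezout inequality or a Vojta-style estimate applied to $V \cap (B+\zeta)$ in a suitable projective embedding, combined with a lower bound on the degree $[k(Y):k]$ coming from Masser-type estimates for the field of definition of the torsion translate. Here one must use that on a maximal torsion anomalous point the intersection is isolated, so B\'ezout yields the correct codimension count and, after the height machinery is in place, a finite union of Northcott classes exhausts all such $Y$.

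The hard part is uniformity in the choice of $(B,\zeta)$: both the height and the degree estimates must be effective in the data of $B+\zeta$ yet absorbable into quantities depending only on $V$. In high relative codimension the required height inequality is essentially a Zilber-Pink statement and is unknown in general; the present paper circumvents this by restricting to relative codimension one in the elliptic setting. Bridging to the full TAC --- particularly in the mid-dimensional regime, where neither Manin-Mumford (too much positive dimension for torsion points to suffice) nor Northcott (no isolated intersection) alone closes the argument --- appears to require genuinely new uniform height inequalities, and this is the central difficulty of the conjecture.
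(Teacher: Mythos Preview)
The statement you are addressing is a \emph{conjecture} in the paper, not a theorem: the paper does not prove TAC, and there is no proof against which to compare your proposal. What the paper does prove is the very special case recorded as Corollary~\ref{corolIMRN}, namely finiteness of maximal $V$-torsion anomalous \emph{points} of relative codimension one in a power of an elliptic curve, and this is deduced from the explicit height bound of Theorem~\ref{teorema} together with earlier work of Viada. Your proposal is therefore not a proof at all but a strategic outline for an open problem, and you yourself correctly flag in the final paragraph that the required uniform height inequalities are not available.

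That said, your outline contains a genuine error in the positive-dimensional case. You write that under the quotient $\pi\colon G\to G/\stab(Y)$ the image $\pi(Y)$ is a point, indeed a torsion point. This is false in general: $\pi(Y)$ is a point only when $Y$ is a coset of $\stab(Y)$, and a component $Y$ of $V\cap(B+\zeta)$ has no reason to be a coset of anything. What \emph{is} true is that the quotient by $B$ sends $Y$ to the single torsion point $\zeta\bmod B$; but then, as you implicitly acknowledge, one must control the infinitely many candidate subgroups $B$, and the claim that this reduces to a ``finite lattice of algebraic subgroups that interact nontrivially with $V$'' is exactly the content of the conjecture, not an input one may invoke. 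So the positive-dimensional branch of your sketch is circular at its key step, and the zero-dimensional branch, while closer in spirit to what the paper actually does, requires (as you note) height bounds that are only known in the narrow cases listed in the introduction.
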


Clearly, if $V$ is not weak-transverse, then $V$ is itself $V$-torsion anomalous. In addition, if $V$ is a hypersurface, the  only maximal $V$-torsion anomalous varieties are the maximal torsion varieties  contained in $V$,  and they are finitely many by the Manin-Mumford Conjecture. So the TAC is interesting when $V$ is weak-transverse of codimension at least $2$.

\medskip
The Zilber-Pink Conjecture  is a special case of the TAC. More  precisely, it is equivalent  to the TAC restricted only to  the $V$-torsion anomalous varieties that  come from an intersection expected to be empty.
So the TAC  implies,  like the Zilber-Pink Conjecture, several  other celebrated questions such as the Manin-Mumford and the Mordell-Lang  Conjectures.    Recent works also highlight links to model theory and to algebraic dynamics, in the context of  the Morton Conjectures.

\medskip

Only the following few cases of the TAC are known:  for curves  in a product of elliptic curves (Viada  \cite{ioant}), in abelian varieties with CM (R\'emond \cite{remond09})  and in a torus  (Maurin  \cite{Maurin}); for varieties of codimension  $2$ in a torus (Bombieri, Masser and Zannier  \cite{BMZ1}) and in a product of elliptic curves with CM  (\cite{TAI}).  Under  stronger geometric hypotheses on $V$, related results are proved by  many other authors.

\medskip

It is proven in several works that, if the height of a set of maximal torsion anomalous points is bounded, then such a set is also finite. So when dealing with points, the obstruction to the TAC is due only to  the lack of bounds for their height.  This leads us to state the following  natural extension of the  Bounded Height Conjecture (BHC), formulated by Bombieri, Masser and Zannier  \cite{BMZ1}.

\begin{con}[BHC']\label{BHC} For an irreducible  variety  $V$  in a  semi-abelian variety, the set of  maximal $V$-torsion anomalous points has bounded height.\end{con}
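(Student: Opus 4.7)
Since the conjecture is open in full generality, I will outline a strategy in the setting actually treated in the paper, namely $G = E^N$ a product of an elliptic curve with itself and $P$ a maximal $V$-torsion anomalous point of relative codimension one. The plan is to squeeze the N\'eron--Tate height $\hat h(P)$ between an upper bound coming from arithmetic intersection theory and a lower bound coming from Galois-orbit estimates.

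Let $B+\zeta$ be a minimal torsion coset for $P$. By the codimension hypothesis, $B$ is a one-dimensional abelian subvariety of $E^N$, and $P$ is an isolated component of $V \cap (B+\zeta)$. The one-dimensional subgroups of $E^N$ are classified (up to isogeny) by tuples of endomorphisms of $E$, so the degree of $B$ is controlled by such a defining tuple. Via an arithmetic Bezout inequality (Philippon's theorem, applied to the proper intersection of $V$ with the torsion coset $B+\zeta$, whose normalised height is zero) one obtains a bound of the shape
\[
\hat h(P) \le c_1 (h(V)+\deg V)(\deg B)^{c_2}
\]
with $c_1,c_2$ depending only on $N$ and $E$. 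The problem thereby reduces to bounding $\deg B$.

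For the lower bound one exploits the maximality of $P$ and the weak-transversality of $V$. The Galois conjugates of $\zeta$ produce distinct torsion cosets $B+\zeta^{\sigma}$, each containing a Galois conjugate of $P$ of the same height. Combining Serre's open image theorem in the non-CM case (or its CM analogue) with a Zhang-type essential-minimum inequality for weak-transverse subvarieties, one obtains an estimate of the form
\[
\hat h(P) \ge c_3(\deg B)^{-c_4},
\]
valid whenever $\deg B$ is sufficiently large. Confronting this inequality with the upper bound forces $\deg B$, and consequently $\hat h(P)$, into a bounded range, which yields the claim.

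The main obstacle is the quality of the Galois-orbit lower bound. In the non-CM case Serre's theorem provides a totally explicit lower bound on $[k(\zeta):k]$ in terms of the order of $\zeta$, which translates into an entirely explicit bound on $\hat h(P)$; in the CM case one has only effective but not fully explicit analogues, which is precisely the source of the distinction stated in the abstract. Removing the relative codimension one hypothesis, or allowing a general semi-abelian ambient, demands considerably finer geometric arguments and lies beyond the reach of the direct strategy just sketched.
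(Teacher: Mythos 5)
Your strategy diverges substantially from the paper's, and the divergence matters: the paper itself explains why the route you sketch cannot deliver the explicit non-CM result. You propose to squeeze $\hat h(P)$ between an upper bound from arithmetic B\'ezout applied to $V\cap(B+\zeta)$ and a lower bound of Lehmer/Bogomolov type extracted from the Galois orbit of the torsion point $\zeta$. But the paper is explicit that a Lehmer-type bound on products of elliptic curves is \emph{not known} in the non-CM case, and that avoiding it is precisely the point (``the use of a Lehmer type bound is a deep obstacle when trying to make the result explicit and such a bound is not known in the non CM case''). Serre's open image theorem controls $[k(\zeta):k]$ in terms of the order of $\zeta$, but that does not by itself produce a height lower bound for the non-torsion point $P$ lying in the translated coset; converting Galois-orbit size into a lower bound on $\hat h(P)$ is exactly the missing Lehmer input. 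So the claim at the end of your third paragraph, that Serre gives ``an entirely explicit bound on $\hat h(P)$'' in the non-CM case, is not justified and contradicts the paper's stated motivation.

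There is also an internal problem with the squeeze as written. Your upper bound has the shape $\hat h(P)\le c_1(h(V)+\deg V)(\deg B)^{c_2}$ (increasing in $\deg B$), while your lower bound has the shape $\hat h(P)\ge c_3(\deg B)^{-c_4}$ (decreasing in $\deg B$). Confronting these gives $c_3(\deg B)^{-c_4}\le c_1(h(V)+\deg V)(\deg B)^{c_2}$, which is trivially satisfied for large $\deg B$ and imposes no bound. For the squeeze to close you would need either the upper bound to decrease in $\deg B$ (after an optimization over some free parameter) or the lower bound to increase, and as stated neither does.

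What the paper actually does is quite different and bypasses lower bounds entirely. Following Habegger's tori construction, one does \emph{not} apply B\'ezout to $V\cap(B+\zeta)$, whose degree $\deg B$ one cannot control. Instead one manufactures, via Minkowski's second theorem and Lemmas \ref{LemmaHab1}--\ref{LemmaHab3}, an auxiliary \emph{translate} $H+P$ of dimension one (Propositions \ref{main}, \ref{mainCM}) whose degree is $\le \ccinque(N,s)T$ and whose normalised height is $\le \ctre T^{1-N/(ms)}\hat h(P)+\cquattro T$, with $T\ge 1$ a free parameter. By maximality of $P$, the point is still an isolated component of $V\cap(H+P)$, and the Arithmetic B\'ezout Theorem gives $h_2(P)\lesssim h(V)\deg H + \deg V\cdot h(H+P)+\ldots$. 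Because $h(H+P)$ contains $\hat h(P)$ with a coefficient that can be made small by choosing $T$ appropriately, the resulting inequality is self-improving: one solves for $\hat h(P)$ directly. No Lehmer, no Bogomolov, no Galois-orbit counting. This single-sided argument is precisely what makes the bound completely explicit in the non-CM case and merely effective in the CM case (where the technical estimates of Section \ref{section3} become less clean, not where some deep lower bound is missing).
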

Like above, if $V$ is not weak-transverse then there are no maximal $V$-torsion anomalous points, because they are all contained in $V$, which is itself $V$-torsion anomalous. 
 
Some relevant  results in this context are due to  Habbeger (see \cite{habIMRN} and \cite{habInv}).  His theorems imply the BHC' in tori and abelian varieties, but only for the $V$-torsion anomalous points not contained in any $V$-anomalous variety.  This  condition is stronger than being maximal. 
In particular for varieties that do not satisfy a geometric condition even stronger than transversality his theorems are saying nothing (in his notations  the set $V^{oa}$ is the empty set).

 In addition, his results are not effective.
 Effective  theorems are  essentially only known for transverse curves in tori and in products of elliptic curves. 

In \cite{TAI}  the authors prove the BHC', but only for $V$-torsion anomalous points of relative codimension one when $V$ is a subvariety of a power of an elliptic curve with CM.
The method is  effective, however the use of a Lehmer type bound  is a deep obstacle  when trying to make the result explicit and such a bound  is not known in the non CM case.

\medskip

In this article we are  concerned with subvarieties of a power of an elliptic curve, regardless  of whether it has CM. From now on the ambient variety $G$ will be $E^N$, where $E$ is an elliptic curve defined over the algebraic numbers,  embedded in $\P_2$ via its Weierstrass equation, and $N$  is a positive integer. We consider on $E^N$ the canonical N\'eron-Tate height, denoted $\hat{h}$.

The  aim of this article is {twofold}: \begin{itemize}
\item  We prove the  BHC'  and  the TAC, for  maximal $V$-torsion anomalous points of relative codimension one. Our method is completely effective.
\item In the non CM case,  we make our proof explicit, computing all constants and obtaining the  only known bounds. 
\end{itemize}
The importance of giving an explicit result is due, for instance, to the implications on the Effective Mordell-Lang Conjecture specified below. More generally,  it is well known that an effective  TAC implies the  Effective Mordell-Lang Conjecture, which has a strong impact in mathematics and it is only known for curves in a torus. \\

Our main result is:

\begin{thm}\label{teorema}
 Let $V$ be an irreducible variety embedded in $E^N$.
Then the set of maximal  $V$-torsion anomalous points of relative codimension one has effectively bounded N\'eron-Tate height. If $E$ {is non} CM the bound is explicit, 
we have
\[
\hat h(P)\leq C_1(N)h(V)(\deg V)^{N-1}+C_2(E,N)(\deg V)^{N}+C_3(E,N),
\]
where 
\begin{align*}
C_1(N)&=(N!)^{N} N^{3N-2} \left( \frac{3^{N^2+N+1}2^{2N^2+3N-1}(N+1)^{N+1}}{(\omega_N \omega_{N-1})^2} \right)^{N-1}\\
C_2(E,N)&=C_1(N)\left(\frac{3^N \log 2}{2}+12 N \log 2+N\log 3+6N h_{\mathcal W}(E)\right)\\
C_3(E,N)&=\frac{7N^2}{6}\log 2+\frac{N^2}{2}h_{\mathcal W}(E),
\end{align*}
 $ \omega_r=\pi^{r/2}/\Gamma(r/2+1)$ is the volume of the euclidean unit ball in $\R^r$, $h(V)$ is the  normalised  height  of $V$ and $h_{\mathcal W}(E)$ is the height of the Weierstrass equation of $E$ (see Section \ref{SezioneAltezze} for  the definitions). 
\end{thm}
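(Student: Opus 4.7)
The plan is to follow the general strategy of the authors' earlier paper in the CM case and adapt it to the non-CM situation, keeping every constant explicit. First I would dispose of two trivial reductions. If $V$ is a hypersurface, a $V$-torsion anomalous point $P$ of relative codimension one would sit in a minimal torsion variety $B+\zeta$ of dimension $1$ and satisfy $N=\codim P < \codim V + \codim B = 1+(N-1)=N$, which is impossible. And if $V$ is not weak-transverse the unique maximal $V$-torsion anomalous variety is $V$ itself. So I may assume $V$ is weak-transverse of codimension at least $2$.

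Fix a maximal $V$-torsion anomalous point $P$ and let $B+\zeta$ be its minimal torsion variety, which has dimension $1$ by hypothesis. Since $E$ has no complex multiplication, $B$ is cut out by $N-1$ independent integer-linear relations on $E^N$. I would pick a short basis of the resulting sublattice of $\Z^N$ using a Minkowski-style argument (this is where the volumes $\omega_N,\omega_{N-1}$ enter), and thereby construct an abelian subvariety $B^\perp$ of $E^N$ of dimension $N-1$ together with an isogeny $\phi: B\times B^\perp\to E^N$ of explicitly controlled degree and height distortion. Composing with projection to the second factor yields a map $\pi: E^N \to B^\perp\cong E^{N-1}$ which contracts $B+\zeta$ to the torsion point $\pi(\zeta)$ and is generically finite on $V$.

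Next I would apply Philippon's Arithmetic B\'ezout Theorem to the intersection of $\pi(V)$ with $\pi(\zeta)$. Since $P$ is isolated in $V\cap(B+\zeta)$, its image $\pi(P)$ is isolated in $\pi(V)\cap\{\pi(\zeta)\}$, and the Arithmetic B\'ezout Theorem produces a bound of the shape
\[
\hat h(\pi(P)) \leq c\,\bigl(h(\pi(V))+\deg\pi(V)\bigr)\bigl(\deg\pi(V)\bigr)^{\dim\pi(V)-1},
\]
the contribution from $\pi(\zeta)$ vanishing because it is torsion. Push-forward formulas of Zhang and Philippon for heights and degrees under isogenies then convert this into a bound for $\hat h(P)$ in terms of $h(V)$, $\deg V$, and the size of the integer matrix defining $B$.

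The main obstacle is that $B$ varies with $P$ and its defining matrix can a priori have arbitrarily large entries, so the previous inequality is not yet uniform. To tame this I would use a dichotomy based on an effective relative Bogomolov-type inequality; in the non-CM case an explicit version due to Galateau is available. Either $\hat h(P)$ is already controlled by a constant times $h(V)+\deg V$ and we are done, or else the essential minimum argument, combined with the minimality of $B+\zeta$, forces $\deg B$ to be bounded by a constant depending only on $h(V)+\deg V$; a Siegel-type estimate then converts this into the required bound on the matrix entries. Threading this uniform control through the Arithmetic B\'ezout estimate and keeping careful track of every numerical constant, in particular those arising from the lattice reduction and from Philippon's inequality, produces the explicit values of $C_1(N)$, $C_2(E,N)$ and $C_3(E,N)$ in the statement.
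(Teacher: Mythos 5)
Your central step is flawed, and the gap is structural rather than a matter of missing constants. After constructing $B^\perp$ and the projection $\pi\colon E^N\to B^\perp$, the image $\pi(P)$ is exactly $\pi(\zeta)$, a torsion point, so $\hat h(\pi(P))=0$; the Arithmetic B\'ezout estimate you invoke for $\pi(V)\cap\{\pi(\zeta)\}$ is vacuously satisfied and carries no information. Decomposing $P=P_0+P^\perp$ with $P_0$ in the direction of $B$ and $P^\perp$ in that of $B^\perp$, one sees that $P^\perp$ is torsion (it differs from $\zeta$ by a torsion point), hence $\hat h(P)=\hat h(P_0)$ up to bounded error, and $\pi$ kills precisely the component $P_0$ that carries the height you want to bound. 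No push-forward formula for heights under isogenies can recover $\hat h(P_0)$ from data living on $B^\perp$, because that data is insensitive to $P_0$.

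The paper's argument is designed to circumvent exactly this loss of information, and it does so by a genuinely different device. Instead of projecting along $B$ and intersecting with a point, it builds, via Minkowski-type lattice reduction on the group generated by the coordinates of $P$, an auxiliary abelian subvariety $H$ of codimension $N-1$ (so $\dim(H+P)=1$) depending on a free real parameter $T\ge 1$, with the key property that $\deg(H+P)\le c\,T$ while $h(H+P)\le c'\,T^{1-\frac{N}{N-1}}\hat h(P)+c''\,T$; the negative exponent $-1/(N-1)$ is crucial. Maximality of $P$ as a $V$-torsion anomalous point ensures that $P$ is an isolated component of $V\cap(H+P)$ (otherwise $H+P\subseteq V$ would give a larger torsion anomalous variety inside $V\cap(B+H+\zeta)$). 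One then applies the Arithmetic B\'ezout Theorem to $V$ and $H+P$ directly, obtaining an inequality in which $\hat h(P)$ appears on both sides but with a small coefficient on the right; choosing $T$ so that this coefficient is strictly less than $1$ and solving for $\hat h(P)$ yields the stated bound. There is no need to bound $\deg B$, and no Lehmer or Bogomolov type dichotomy is used — the paper explicitly notes that such lower-bound arguments cannot currently be made explicit in the non-CM case, which is precisely why the approximation-by-$H$ construction (adapted from Habegger's torus argument) was adopted. Your proposal replaces this mechanism with a Bogomolov-style dichotomy that is not known to run explicitly for points, and in any case it would not rescue the projection step, which loses the relevant height before any Diophantine input is applied.
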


The bound of Theorem \ref{teorema} does not depend on the field of definition of $V$, unlike other bounds in similar  contexts. This is  central for our applications.

The structure of the  proof of Theorem \ref{teorema} does not distinguish CM from non CM elliptic curves. The constants can be computed also in the CM case, however further technical complications due to the structure of the endomorphism ring of $E$ would make the presentation less clear. For simplicity, we {prefer} to give the explicit computation only in the non CM case.

In \cite{IMRNViada} Theorem 1.1 (and a remark at page 1220 for the CM case), E. Viada  proved,  that on a weak-transverse variety $V\subseteq E^N$, the maximal $V$-torsion anomalous points of bounded height are finitely many.
 Since there are no maximal $V$-torsion anomalous  points if $V$ is not weak-transverse, we immediately deduce the following special case of the TAC.
\begin{cor}\label{corolIMRN}
An irreducible subvariety $V$ of $E^N$ contains only finitely  many maximal $V$-torsion anomalous points of relative codimension one.
\end{cor}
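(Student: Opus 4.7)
The plan is to reduce the corollary to a combination of Theorem \ref{teorema} and the earlier finiteness result of Viada in \cite{IMRNViada}, as essentially sketched just before the statement. So the work consists of assembling these two ingredients and handling the (trivial) non weak-transverse case.

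First I would split according to whether $V$ is weak-transverse. If $V$ is not weak-transverse, then $V$ is contained in some torsion subvariety and is therefore itself $V$-torsion anomalous (taking $Y=V$ in the definition, the codimension inequality is strict because $V$ sits in a proper torsion subvariety of $E^N$). Any $V$-torsion anomalous point $P$ lies in $V$ by definition, and since $\dim V \geq \dim\{P\}$ with equality only if $V=\{P\}$ (a case in which the statement is vacuous), $P$ is strictly contained in the $V$-torsion anomalous variety $V$, hence $P$ is not maximal. So in this case the set in question is empty.

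Next, assume $V$ is weak-transverse. By Theorem \ref{teorema} applied to $V$, the Néron-Tate height of every maximal $V$-torsion anomalous point of relative codimension one is bounded by an effective constant depending only on $V$ and $E^N$ (explicit in the non CM case). This gives a set of algebraic points in $E^N$ of bounded Néron-Tate height lying on $V$. Now I invoke Theorem 1.1 of \cite{IMRNViada} (together with the remark on page 1220 covering the CM case), which asserts precisely that on a weak-transverse variety $V\subseteq E^N$ the set of maximal $V$-torsion anomalous points of bounded Néron-Tate height is finite. Combining these two facts gives the finiteness of maximal $V$-torsion anomalous points of relative codimension one.

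There is essentially no obstacle: all the hard work sits inside Theorem \ref{teorema} (the explicit height bound) and inside the cited finiteness statement. The only thing to verify is that the two results apply compatibly, namely that the hypotheses match (weak-transversality, and that maximality of relative codimension one points here is the same notion as in \cite{IMRNViada}), which is immediate from the definitions recalled in the paper. Hence the corollary follows at once.
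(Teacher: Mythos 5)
Your proposal is correct and matches the paper's own (brief) argument: split according to whether $V$ is weak-transverse, observe there is nothing to prove otherwise, and combine the height bound from Theorem \ref{teorema} with the finiteness of bounded-height maximal $V$-torsion anomalous points on weak-transverse $V$ from \cite{IMRNViada}. The only tiny imprecision is calling the zero-dimensional case ``vacuous''; it is not vacuous but trivially finite (at most one point), which does not affect the conclusion.
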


\medskip

As an application of our main theorem we obtain new explicit results in the context of the Effective Mordell-Lang Conjecture. 
In the setting of abelian varieties, the only known effective methods  for the Mordell-Lang Conjecture  are the Chabauty-Coleman method (see \cite{MP}) and the Manin-Demjanenko method (see \cite{serreMWThm}, Chapter 5.2). Both methods are quite difficult to apply; for some of the few explicit applications, see \cite{MP} and \cite{KuleszApplicazioneMD}.

In Section \ref{expli} we prove the following explicit theorem on points of rank 1;  the rank of a  point in $E^N$ is the rank of the subgroup of $E$ generated by its coordinates.

 \begin{thm}\label{corMW}
 Let $N\ge 3$ and let $\mathcal{C}\subseteq E^N$ be a weak-transverse curve.
 The set of points   $P\in \Ci$ of rank $\le 1$ 
 is a set of N\'eron-Tate height effectively bounded. If $E$ is non CM, we have that \begin{align*}
 \hat h(P)\leq C_1(N)h(\Ci)(\deg \Ci)^{N-1}+ C_2(E,N) (\deg \Ci)^{N}+C_3(E,N),
 \end{align*}
 where $C_1(N), C_2(E,N), C_3(E,N)$ are the same as in Theorem \ref{teorema}.

 If $\Ci$ is a transverse curve in $E^2$, then the set of points $P\in \Ci$ of rank $\le1$ is a set of N\'eron-Tate height effectively bounded. If $E$ is non CM, we have that
 \begin{align*}
 \hat h(P)\leq D_1h(\Ci)(\deg \Ci)^{2}+ D_2(E) (\deg \Ci)^{3}+D_3(E),
 \end{align*}
where
\begin{align*}
  D_1&=\frac{2^{64}3^{40}}{\pi^{8}} &\approx& 2.364\cdot 10^{34}\\
    D_2(E)&=\frac{2^{62}3^{41}}{\pi^{8}}\left(71\log 2+4\log 3+30h_{\mathcal W}(E)\right) &\approx& \left(5.319 \cdot h_{\mathcal W}(E) +9.504\right)\cdot 10^{35}\\
  D_3(E)&=\frac{9}{2}h_{\mathcal W}(E)+\frac{21}{2}\log 2 &\approx& 4.5\cdot h_{\mathcal W}(E)+7.279.
\end{align*}
In particular, in both cases, if $k$ is a field of definition for $E$ and $E(k)$ has rank 1, then all points in $\mathcal{C}(k)$  have N\'eron-Tate height effectively bounded as above.
 \end{thm}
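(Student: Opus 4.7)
\medskip

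My plan is to reduce the rank-$\leq 1$ hypothesis on $P$ to a statement about $V$-torsion anomalous points of relative codimension one, so that Theorem \ref{teorema} can be applied. The argument splits into the two cases matching the statement.

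\medskip

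For the first case ($N \geq 3$), I would take $P = (P_1, \dots, P_N) \in \Ci$ of rank $\leq 1$ and observe that, by the rank hypothesis, there exist a point $Q \in E$, integers $a_1, \dots, a_N$, and torsion points $T_i \in E_{\mathrm{tor}}$ with $P_i = a_i Q + T_i$. Hence $P$ lies on a torsion translate $B + \zeta$ with $\dim B \leq 1$. After choosing such a translate of minimal dimension, the case $\dim B = 0$ gives $P$ torsion with $\hat h(P) = 0$; in the case $\dim B = 1$, $P$ is a component of $\Ci \cap (B+\zeta)$ and the inequality
\[
\cod P = N < (N-1)+(N-1) = \cod \Ci + \cod B
\]
holds precisely because $N \geq 3$. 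Thus $P$ would be $\Ci$-torsion anomalous of relative codimension one, and maximal since the only positive-dimensional irreducible subvariety of $\Ci$ containing $P$ is $\Ci$ itself, which fails to be $\Ci$-torsion anomalous as $\Ci$ is weak-transverse. Theorem \ref{teorema} applied to $V = \Ci$ then yields the claimed bound, with constants matching $C_1(N), C_2(E,N), C_3(E,N)$ verbatim.

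\medskip

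For the second case, a transverse curve $\Ci \subseteq E^2$, the codimension inequality above would now read $2 < 2$ and fail; I would therefore lift to $E^3$. My plan is to construct an auxiliary curve $V \subseteq E^3$ associated to $\Ci$ (for instance as the image of $\Ci$ under a graph-type embedding $\iota: (x_1, x_2) \mapsto (x_1, x_2, \varphi(x_1, x_2))$ for a carefully chosen morphism $\varphi$), together with a point $\widetilde P \in V$ derived from $P$, in such a way that $\widetilde P$ is a maximal $V$-torsion anomalous point of relative codimension one in $E^3$. I would then apply Theorem \ref{teorema} with $N = 3$ to $V$ to bound $\hat h(\widetilde P)$, and transfer this to a bound on $\hat h(P)$ via the N\'eron--Tate height comparison between the two points. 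The explicit constants $D_1, D_2, D_3$ should emerge by substituting the relations between $\deg V$, $h(V)$ and $\deg \Ci$, $h(\Ci)$ into the formulas of Theorem \ref{teorema}; the equality $D_3(E) = C_3(E,3)$ already visible in the statement confirms that $N = 3$ is the operative specialisation.

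\medskip

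The hard part will be the construction of $V$ in the transverse case. On one hand, $V$ should be weak-transverse in $E^3$, otherwise $V$ is itself $V$-torsion anomalous and $\widetilde P$ can never be maximal; on the other hand, the minimal torsion translate through $\widetilde P$ must have dimension exactly one, so the relative codimension is one. These requirements pull in opposite directions: any graph of a group homomorphism lies in a proper subgroup of $E^3$, violating weak-transversality, whereas translating by a non-torsion point generically forces the minimal torsion translate through $\widetilde P$ to have dimension at least two. Finding an embedding that balances these two constraints is where the substantive effort will go. Finally, the rank-one consequence stated at the end of the theorem is immediate: if $k$ is a field of definition of $E$ with $E(k)$ of rank one, then every $P \in \Ci(k)$ has all coordinates in $E(k)$, so the subgroup they generate has rank at most one and one of the two previous cases applies.
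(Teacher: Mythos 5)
For the weak-transverse case $N\ge3$, your argument matches the paper's: identify $P$ of rank $\le1$ as lying on a one-dimensional torsion coset $B+\zeta$, check the codimension inequality ($N<2N-2$ for $N\ge3$), observe maximality from weak-transversality of $\Ci$, and invoke Theorem~\ref{teorema}. (The paper writes down the defining equations $a_ib_1X_i=a_1b_iX_1$ of $B$ explicitly and checks that the coefficient matrix has rank $N-1$, which you gloss over, but the substance is the same.)

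For the transverse case in $E^2$, however, you have correctly identified the obstruction but not resolved it; this is a genuine gap. Your proposed graph-type lift $(X_1,X_2)\mapsto(X_1,X_2,\varphi(X_1,X_2))$ is in fact the wrong direction, for exactly the reason you note: if $\varphi$ is a homomorphism the image sits inside a proper algebraic subgroup of $E^3$ and is never weak-transverse, while perturbing by a generic point wrecks the relative codimension. The paper's device is simpler and of a different nature: it does not alter $\Ci$ geometrically at all, but takes $V=\Ci_M'=\Ci\times\{Q_M\}\subseteq E^3$ where $Q_M$ is an $M$-th division point of the generator $g$ of $\Gamma_P$, so that $\hat h(Q_M)=\hat h(g)/M^2$ can be made as small as one likes. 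Since $\Ci$ is transverse in $E^2$, the product $\Ci\times\{Q_M\}$ is automatically weak-transverse in $E^3$ (no homomorphism condition needed), the augmented point $P_M'=P\times\{Q_M\}$ still has rank $1$, and $\hat h(P)\le\hat h(P_M')$. The price is that $h(\Ci_M')$ picks up an error of order $\hat h(Q_M)\deg\Ci$, controlled via Zhang's inequality and the comparison \eqref{mu2mu^} between essential minima; because the point $P$ is fixed while $M$ (equivalently $\epsilon$) is a free parameter, one may let $\epsilon\to 0$ at the end and the error term vanishes. This limiting argument is what produces the clean constants $D_1,D_2(E),D_3(E)$, and it is precisely the step your outline leaves as "the hard part" without supplying.
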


The assumption $N\ge3$ is necessary for weak-transverse curves. Indeed any weak-transverse translate of $E^2$ (for example $E\times p$ with $p$ not a torsion point)  contains infinitely many points of rank 1 and of unbounded height (in the example the points $([n]p,p)$ for all natural $n$).

\smallskip

Finally,  we  give  explicit bounds in a specific family of  curves.
 This example is particularly interesting as it gives, at least in principle, an algorithm to find all their rational points.
Let $E$ be the elliptic curve defined by the equation $y^2=x^3+x-1$; the group $E(\Q)$ has rank 1 with generator $g=(1,1)$  (see Section \ref{expliCn}). We write 
\begin{align*}
y_1^2=x_1^3+x_1-1\\
y_2^2=x_2^3+x_2-1
\end{align*}
for the equations of $E^2$ in $\P_2^2$, using affine coordinates $(x_1,y_1)\times (x_2,y_2)$.
We have the following theorem, proved in  Section \ref{expliCn}. 
\begin{thm}\label{teoremacurveCn}
Let $E$ be the elliptic curve defined above, and consider the family of curves $\{\Ci_n\}_n$ with $\Ci_n\subseteq E^2$ defined via the additional equation $x_1^n=y_2$. Then for every $n\geq 1$, if $P\in \Ci_n(\mathbb{Q)}$ we have 
\[\hat{h}(P)\leq 8.253\cdot 10^{38} (n+1)^3\]
Moreover, writing $P=([a]g,[b]g)$,  the following inequalities hold 
\begin{equation*}
|a|\leq 7.037\cdot 10^{19}(n+1)
\end{equation*}
{and
\begin{equation*}
|b|\leq \left(\frac{3na^2}{2}+14\log 2+10\right)^{\frac{1}{2}}.
\end{equation*}}
\end{thm}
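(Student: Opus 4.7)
The plan is to apply the transverse-curve case of Theorem~\ref{corMW} to each $\Ci_n\subseteq E^2$ and then to convert the resulting bound on $\hat{h}(P)$ into the stated integer bounds on $a,b$, using the explicit Weierstrass model $E:y^2=x^3+x-1$ and the structure of the defining equation $x_1^n=y_2$.

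Applicability of Theorem~\ref{corMW} requires $\Ci_n$ to be an irreducible transverse curve. Irreducibility follows because $x_2^3+x_2-1-x_1^{2n}$ has no polynomial root in $x_1$ (short degree count), hence is irreducible in $\mathbb{Q}[x_1,x_2]$; together with the irreducibility of $y_1^2-(x_1^3+x_1-1)$ this shows that the function field of $\Ci_n$ is a field. Transversality: the projection $\Ci_n\to E$ onto the first factor has degree $3$ with ramification increasing in $n$, so by Riemann--Hurwitz the genus of $\Ci_n$ tends to infinity, while every translate of a one-dimensional algebraic subgroup of $E^2$ has genus $1$. Then $\deg \Ci_n=9(n+1)$ by intersection theory in $\mathbb{P}^2\times\mathbb{P}^2$ (the polynomial $x_1^n-y_2$ is bihomogeneous of bidegree $(n,1)$), and an arithmetic B\'ezout estimate applied to the defining polynomials (which have coefficients in $\{0,\pm 1\}$) gives $h(\Ci_n)=O(n)$ with explicit small constants; finally $h_{\mathcal W}(E)=0$. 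Substituting into the formula of Theorem~\ref{corMW}, the dominant contribution $D_2(E)\cdot(9(n+1))^3\approx 6.93\cdot 10^{38}(n+1)^3$ plus the subleading terms yields $\hat{h}(P)\le 8.253\cdot 10^{38}(n+1)^3$.

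To pass from $\hat{h}(P)$ to bounds on $a,b$, I use $\hat{h}(P)=(a^2+b^2)\hat{h}(g)$ together with an explicit positive lower bound on $\hat{h}(g)$ derived from $x([2]g)=2$ and Silverman's comparison $|\hat h(P)-\tfrac12 h(x(P))|\le c_E$ specialised to our $E$. The equation $x_1^n=y_2$ gives $n\cdot h(x([a]g))=h(y([b]g))$; combined with the comparisons $h(x(P))=2\hat h(P)+O(1)$ and $h(y(P))=3\hat h(P)+O(1)$, this produces a two-sided explicit inequality linking $b^2$ and $na^2$. Tracking the explicit constants (via, e.g., the elementary estimate $h(x_2^3+x_2-1)\le 3h(x_2)+\log 3$ applied to $x_1^{2n}=x_2^3+x_2-1$), the upper side yields $|b|\le\bigl(\tfrac{3na^2}{2}+14\log 2+10\bigr)^{1/2}$; the lower side, in turn, forces $a^2(1+cn)\le \hat h(P)/\hat h(g)$ for an explicit positive $c$, hence $a^2=O(n^2)$ and finally $|a|\le 7.037\cdot 10^{19}(n+1)$. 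The main obstacle is carefully threading the numerical constants through each step: the large prefactors $D_1,D_2(E)$ in Theorem~\ref{corMW} amplify any slack in the B\'ezout bound for $h(\Ci_n)$, so the announced constant $8.253\cdot 10^{38}$ requires a tight B\'ezout estimate together with a sharp value for $\hat h(g)$.
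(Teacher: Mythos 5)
Your overall architecture matches the paper's: verify that $\Ci_n$ is irreducible and transverse, compute $\deg\Ci_n=9(n+1)$ and bound $h(\Ci_n)$, feed these into the $N=2$ case of Theorem~\ref{corMW}, and then convert the height bound into bounds on $a,b$ using $\hat h(P)=(a^2+b^2)\hat h(g)$, the curve relation $x([a]g)^n=y([b]g)$, and the Silverman/Zimmer height comparisons. The one genuinely different choice of method is the estimate for $h(\Ci_n)$: you propose an arithmetic B\'ezout bound on the defining polynomials, whereas the paper avoids B\'ezout (which would need a normalised height of $E^2\subset\P_8$ and carries the extra $\frac{(m+1)\log 2}{2}\deg X\deg Y$ term) and instead bounds the essential minimum $\mu(\Ci_n)$ directly by exhibiting a Zariski-dense family of low-height points $Q_\zeta=((x_1,y_1),(\zeta,y_2))$ with $\zeta$ a root of unity, then applies Zhang's inequality. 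This gives $h(\Ci_n)\leq 9(n+1)\log 3\cdot\frac{4n+3}{n}$ with no slack; since the constant $8.253\cdot10^{38}$ leaves essentially no room beyond the dominant $D_2(E)(9(n+1))^3\approx 6.93\cdot10^{38}(n+1)^3$, you would need your B\'ezout route to produce a bound at least this tight, which is far from obvious and is not something you establish.

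There is a concrete gap in your lower bound for $\hat h(g)$. You propose deriving $\hat h(g)\geq \tfrac14$ from $x([2]g)=2$ and Silverman's two-sided comparison. For this curve $\Delta=-496$, $j=6912/31$, so the lower-side constant in \eqref{BoundSilvermanAltezze} is $\frac{h(j)}{24}+\frac{h(\Delta)}{12}+\frac{h_\infty(j)}{12}+0.973\approx 2.31$, while $\tfrac12 h(x([2]g))=\tfrac12\log 2\approx 0.35$; this yields $4\hat h(g)\geq 0.35-2.31<0$, which is vacuous. Iterating with $[m]g$ does eventually turn positive, but only for $m$ around $7$ or larger, and you would then need to compute $x([m]g)$ for such $m$ and re-optimise. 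The paper instead obtains $\hat h(g)\geq 1/4$ by a direct numerical computation of the canonical height (PARI/GP, Silverman's sigma/theta algorithm); some such calibration step is necessary, and your $[2]g$ argument cannot replace it. Everything downstream — in particular the constant $7.037\cdot10^{19}$ in the bound for $|a|$ — depends on having this positive lower bound in hand.
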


\medskip
Our  explicit results cannot be obtained with the method used in \cite{TAI}, because the Lehmer type bound is not known in the non CM case and anyway there are no published proofs  that such a bound is effective. The available proof could possibly be made explicit, but to get reasonable bounds it would be necessary to  avoid the use of a complicated descent argument, using instead  a much simpler induction, as done for tori by Amoroso and Viada in \cite{amoviadacomm}. Nevertheless, even with such  improvements, the constants so obtained would be far from being optimal. Probably the dependence on the dimension $N$ could not be improved further than $N^{N^N}$, which is of one exponential more than our bound.

\medskip

We now briefly describe the proof-strategy of our main result.

The proof of  Theorem  \ref{teorema} relies on an approximation process. Let $P$ be a point as in Theorem \ref{teorema}; in particular $P$ is a component of $V\cap (B+\zeta)$ for some torsion  variety $B+\zeta$. We will replace $B+\zeta$ with an auxiliary translate of the form $H+P$ in such a way that $P$ is still a component of $V\cap (H+P)$, and the degree and height  of $H+P$ can be controlled in terms  of $\hat h(P)$.
Using the properties of the height functions, we can in turn control the height of $P$ in terms of the height and degree of $H+P$ itself, and combining carefully these inequalities leads to the desired result.

This construction has been introduced in tori by {Habegger} \cite{hab}, Lemma 5. When adopting this strategy for subvarieties of a power of an elliptic curve several complications arise in computing degrees and heights.
While the degree of a subtorus can be easily related to the associated matrix, to compute the degree of a subvariety of $E^N$, it is necessary to fix an embedding of the ambient variety in a projective space and to study how geometrical and arithmetical objects behave under this embedding. This is done in Section  \ref{section3}. Concerning heights,  to make the results explicit, we need to work with different heights functions  and use explicit versions of several bounds relating the height functions on $E^N$ and those in the projective spaces $\P_2^N$ and $\P_m$. 
Furthermore, we need to adapt and simplify some of the arguments in \cite{hab}, in order to keep the constants as small as possible.  This is done in  Section \ref{proofmain}. 

\medskip

The following is an outline of the content of the different sections of this paper.

In Section \ref{prelim-H}  we recall some classical results such as  the  Arithmetic B\'ezout Theorem,  the Zhang Inequality and  the Minkowski Theorem. We also present the geometrical setting  and we give explicit bounds relating different height functions.
Moreover, we recall the correspondence between  algebraic subgroups of $E^N$ and matrices with coefficients in $\emor(E)$.

In Section \ref{tai} we give the structure of the proof of Theorem \ref{teorema} while postponing to Sections \ref{section3} and \ref{proofmain} the proof of the technical step. 

 In Section \ref{expli}  we  give the proof of Theorem \ref{corMW} and in Section \ref{expliCn} we prove Theorem \ref{teoremacurveCn}.

In Section \ref{section3} we  compute explicit bounds for the degree of the rational functions that represent morphisms  from $E^N$ to $E$.

In Section \ref{proofmain}, for a torsion anomalous point $P$ of relative codimension one, we give the construction of the auxiliary translate $H+P$, showing how to bound its height and degree.

\section{Embeddings, heights and algebraic subgroups}\label{prelim-H}
Let $E$ be an elliptic curve without complex multiplication.  We fix a Weierstrass equation
\begin{equation*}
E: y^2=x^3+Ax+B
\end{equation*}
with $A$ and $B$  algebraic integers (this hypothesis is not restrictive).  As usual, we define 
\[
 \Delta=-16(4A^3+27B^2) \qquad j=\frac{-1728(4A)^3}{\Delta}.
\]

{In this section we give explicit bounds for embeddings of varieties in the projective space. In Subsection \ref{sezioneSegre} we compute the  degree of $E^N$ as a subvariety of $\P_{3^N-1}$, via the Segre embedding.}
 In Subsection \ref{SezioneAltezze}, we define several height functions, state their relevant properties and give explicit bounds between different heights. We also recall the Arithmetic B\'ezout Theorem and  the Zhang Inequality.  In Subsection \ref{prelim-S} we recall the relations between algebraic subgroups and matrices, degrees and  minors.

\subsection{Segre embedding}\label{sezioneSegre}
Let us consider the composition of maps
\begin{equation*}
  E^N \hookrightarrow \P_2^N \hookrightarrow \P_{3^N-1}.  
\end{equation*}
{The first map sends a point $(X_1,\ldots,X_N)$ to $((x_1,y_1),\ldots(x_N,y_N))$ where $(x_i,y_i)$ are the affine coordinates of $X_i$ in the Weierstrass form of $E$.} The second {map} is the Segre embedding. When computing heights and degrees of points and subvarieties, we will think them as embedded in $\P_{3^N-1}$ via the previous map.

The degree of a variety $V\subseteq \P_m$, in particular, is the maximal cardinality of a finite intersection $V\cap L$, with $L$ a linear subspace of dimension equal to $\cod V$,  the codimension of $V$. This degree is often conveniently computed as an intersection product.

\medskip

Let $X(E,N)$ be the variety in $\mathbb{P}_{3^N-1}$ identified with $E^N$ via the Weierstrass form and the Segre embedding.
The following lemma {computes} its degree. 
\begin{lem}\label{defic2} Let us denote by $\cuno$ the degree of $X(E,N)$. Then
\begin{equation}\label{valc4}\cuno=3^N N!.\end{equation}
\end{lem}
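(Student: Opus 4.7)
The plan is to compute the degree as a classical intersection number on $\mathbb{P}_2^N$, using the well-known behaviour of the Segre embedding on cohomology classes. Concretely, I would first observe that if $H$ denotes the hyperplane class of $\mathbb{P}_{3^N-1}$ and $H_i$ is the pullback of the hyperplane class from the $i$-th factor of $\mathbb{P}_2^N$, then the Segre embedding satisfies $\sigma^*H=H_1+\cdots+H_N$. Since the degree of $X(E,N)$ in $\mathbb{P}_{3^N-1}$ is computed by $(\sigma^*H)^N\cdot[E^N]$ evaluated on $\mathbb{P}_2^N$, the problem reduces to an intersection calculation on a product of projective planes.

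Next I would use that $E\subset\mathbb{P}_2$ is the zero locus of the Weierstrass cubic, so its class in $\mathbb{P}_2$ is $3H$; by the Künneth decomposition of the Chow ring of $\mathbb{P}_2^N$, the class of $E^N$ is then
\[
[E^N]=\prod_{i=1}^{N}(3H_i)=3^N\,H_1H_2\cdots H_N.
\]
Combining these two observations, the degree equals
\[
(H_1+\cdots+H_N)^N\cdot 3^N H_1\cdots H_N
\]
evaluated on $\mathbb{P}_2^N$.

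Expanding $(H_1+\cdots+H_N)^N$ by the multinomial theorem yields $\sum_{|\alpha|=N}\binom{N}{\alpha_1,\dots,\alpha_N}H_1^{\alpha_1}\cdots H_N^{\alpha_N}$. After multiplying by $H_1\cdots H_N$ the exponent of each $H_i$ becomes $\alpha_i+1$; since $H_i^3=0$ on $\mathbb{P}_2$ (and the ambient class on $\mathbb{P}_2^N$ has total degree $2N$), only the term with $\alpha_i+1=2$ for every $i$—equivalently $\alpha_i=1$ for all $i$—survives. The corresponding multinomial coefficient is $N!/(1!)^N=N!$, and the class $H_1^2\cdots H_N^2$ is the fundamental class of the point on $\mathbb{P}_2^N$, so it contributes $1$ to the intersection number. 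Putting everything together gives $\cuno = 3^N\cdot N!$, as claimed.

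There is no real obstacle here; the only point requiring care is to ensure that the standard multinomial/intersection-theoretic identities are invoked correctly and that the vanishing $H_i^3=0$ on $\mathbb{P}_2$ is used to isolate the single surviving term in the multinomial expansion.
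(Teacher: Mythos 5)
Your proposal is correct and follows essentially the same route as the paper: compute the degree as an intersection product in the Chow ring of $\P_2^N$, using that the Segre pullback of the hyperplane class is $H_1+\dotsb+H_N$ and that $[E^N]=3^N H_1\dotsm H_N$, then extract the single surviving multinomial term. The only difference is that you spell out the multinomial expansion in more detail, whereas the paper states the resulting identity directly.
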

\begin{proof}
We can compute this degree by means of the intersection product in $\P_2^N$.
By definition, the degree of $X(E,N)$ is obtained intersecting it with $N$ hyperplanes in general position in $\mathbb{P}_{3^N-1}$, and computing the degree of the cycle thus obtained in the Chow ring. {Let $l$ be the class of a line in the Picard group of $\P_2$, and $l_i$ its pullback $\pi_i^*(l)$ through the projection $\pi_i:\P_2^N\to\P_2$ on the $i$-th factor. Then the Chow ring of $\P_2^N$ is described as $\Z[l_1,\dotsc,l_N]/(l_1^3,\dotsc,l_N^3)$. The class of  a hyperplane of $\mathbb{P}_{3^N-1}$ restricts to the element $l_1+\dotsb+l_N$, and the class of $E^N$ is easily seen to be $(3l_1)\dotsm (3l_N)$.} The desired intersection is therefore
\[(3l_1)\dotsm (3l_N)(l_1+\dotsb+l_N)^N=3^N N! (l_1\dotsm l_N)^2,\]
and the statement follows.
\end{proof}

\subsection{Heights}\label{SezioneAltezze}
 We need to work with different height functions. These height functions are all related to one another by effective relations. Making these relations explicit for applications is sometimes a delicate task. In this section, based on the work of Silverman and Zimmer, we are going to make explicit the constants that we will need.

\medskip 

Let $\Mm_K$ be the set of  places of a number field $K$.
For a point $P=(P_0:\dotsb :P_m)\in \P_m(K)$ let
\begin{equation}\label{defiH}
h(P)=\sum_{v\in\Mm_K}\frac{[K_v:\Q_v]}{[K:\Q]}\log \max_i \{\abs{P_i}_v\}
\end{equation}
be the logarithmic Weil height, and let
\begin{equation}\label{defiH2}
h_2(P)=\sum_{v\text{ finite}}\frac{[K_v:\Q_v]}{[K:\Q]}\log \max_i \{\abs{P_i}_v\} +\sum_{v\text{ infinite}}\frac{[K_v:\Q_v]}{[K:\Q]}\log \left(\sum_i \abs{P_i}_v^2\right)^{1/2} 
\end{equation}
be a modified version of the height that differs from the Weil height at the archimedean places.
They are both well-defined,  they extend to $\overline\Q$ and it follows easily from their definitions that
\begin{equation}\label{stima_altez}
h(P)\leq h_2(P)\leq h(P)+\frac{1}{2}\log(m+1).\end{equation}

\medskip

For an algebraic {number $x\in K$,}  the logarithmic Weil height (for short Weil height) $h(x)$ {is} the logarithmic Weil height  of the projective point $(x:1)\in\P_1$. We also denote $h_{\infty}(x)$ the contribution to the Weil height coming from the archimedean places, namely:
\[
 h_\infty(x)=\sum_{v\text{ infinite}}\frac{[K_v:\Q_v]}{[K:\Q]} \max \{\log \abs{x}, 0\}.
\]

\medskip

For a point $P$ on $E$, {$\hat h(P)$ is the} canonical N\'eron-Tate height, which is related to the Weil height of the $x$ coordinate  {of $P$} by the following bound (\cite{SilvermanDifferenceHeights}, Theorem 1.1):

\begin{equation}\label{BoundSilvermanAltezze}
 -\frac{h(j)}{24}-\frac{h(\Delta)}{12}-\frac{h_\infty(j)}{12}-0.973\leq \hat h(P)-\frac{1}{2}h(x(P))\leq \frac{h(\Delta)}{12}+\frac{h_\infty(j)}{12}+1.07.
\end{equation}
{To relate $\hat h(P)$ and the Weil height of the point $P$,  we define the \emph{Weil height of the Weierstrass equation of $E$} 
\begin{equation*}
h_{\mathcal W}(E)=h(1:A^{1/2}:B^{1/3})
\end{equation*}
as the Weil height of the projective point $(1:A^{1/2}:B^{1/3})\in\P_2$. Then by
\cite{ZimmerAltezze},  p.~40, we have}
\begin{equation}\label{stima_zimmer}
-\frac{h_{\mathcal W}(E)}{2}-\frac{7}{6}\log 2\leq \frac{1}{3} h(P)-\hat h(P)\leq h_{\mathcal W}(E)+2\log 2.
\end{equation}

\medskip

For a point $P=(P_1,\dotsc,P_r)\in\P_{d_1-1}\times\dotsb\times\P_{d_r-1}$ we define $h(P)$ and $h_2(P)$ applying formulae \eqref{defiH} and \eqref{defiH2} to the image of $P$ in $\P_{d_1 \dotsm d_r -1}$ via the Segre embedding. With these definitions the following relation holds:
\[h(P)=\sum_{i=1}^r h(P_i).\]

{For a point} $P=(P_1,\dotsc,P_r)\in E^r$, {the canonical height $\hat h(P)$ is}  the sum
\begin{equation}\label{sumhcan}
 \hat h(P)=\sum_{i=1}^r \hat h(P_i).
\end{equation}
In particular, combining \eqref{stima_zimmer} with \eqref{stima_altez}  we get that for every point $P\in E^N$ we have
\begin{equation}\label{stima_alte}\hat{h}(P)\leq \frac{h_2(P)}{3}+\cdue(E,N),\end{equation}
where \begin{equation}\label{valC3}\cdue(E,N)=\frac{N}{2}h_{\mathcal W}(E)+\frac{7N}{6}\log 2.\end{equation}

 From \eqref{stima_altez} and \eqref{stima_zimmer}, for every point $P$ of $E^N$  we also have
\begin{equation}\label{altezza2altezza^}
h_2(P)\leq h(P)+\frac{N}{2}\log 3\leq 3 \hat h(P) +\cquindici(E,N),
\end{equation}
where 
\begin{equation}\label{valc3}\cquindici(E,N)=N(3h_{\mathcal W}(E)+6\log 2+\frac{1}{2}\log 3).\end{equation}

\medskip

For a subvariety $V\subseteq\P_m$  we {consider} the  normalised height of $V$, denoted $h(V)$, {defined}  in terms of the Chow form of the ideal of $V$, as done  in \cite{patriceI} and \cite{patrice}. We remark that, with this definition, the height of a point $P$ regarded as a 0-dimensional variety is equal to the  height $h_2(P)$ previously defined, which is not equal, in general, to the Weil height of the point. To avoid confusion, we will always write $h_2(P)$ to denote the height of the variety $\{P\}$.

\bigskip

We end this subsection by recalling two classical results on the normalised height, the Arithmetic B\'ezout Theorem and the Zhang Inequality.
\begin{thm}[Arithmetic B\'ezout Theorem]\label{AriBez}
 Let $X$ and $Y$ be irreducible closed subvarieties of $\P_m$ defined over $\overline\Q$. If $Z_1,\dotsc,Z_g$ are the irreducible components of $X\cap Y$, then
 \[
  \sum_{i=1}^g h(Z_i)\leq\deg(X)h(Y)+\deg(Y)h(X)+\frac{(m+1)\log 2}{2}\deg(X)\deg(Y).
 \]
\end{thm}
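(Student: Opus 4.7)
The statement is a classical result (essentially due to Philippon, with sharpenings by Bost--Gillet--Soul\'e), and any proof must engage directly with the Chow-form/Mahler-measure definition of $h(V)$ that has just been cited from \cite{patriceI} and \cite{patrice}. So the plan is to work with that definition throughout.

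My first step is to recall the setup: to an irreducible subvariety $V \subseteq \P_m$ of dimension $d$ one attaches a Chow form $F_V$, a multihomogeneous polynomial in $(d{+}1)(m{+}1)$ variables, well-defined up to a scalar, whose degree in each block of variables is $\deg V$; then $h(V)$ equals (a suitable normalisation of) the global Mahler measure of $F_V$, summed with the logarithm of an $\ell^2$-type norm at each archimedean place. With this definition the two inequalities I need at each place of a number field are: $(\mathrm{i})$ an \emph{algebraic} identity relating $F_{X \cap Y}$ to the Chow forms of $X$ and $Y$, which will give the degree-theoretic shape of the bound, and $(\mathrm{ii})$ a \emph{norm estimate} controlling how Mahler measures/$\ell^2$-norms behave under the algebraic operation in $(\mathrm{i})$.

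For $(\mathrm{i})$ my plan is to avoid dealing with $X \cap Y$ directly and instead go through the diagonal trick. Consider $X \times Y \subseteq \P_m \times \P_m$, embed it in $\P_{(m+1)^2 -1}$ via the Segre map, and cut by the linear equations $U_{ij} = U_{ji}$ defining the diagonal $\Delta$. One has $(X \times Y) \cap \Delta \simeq X \cap Y$, and since $\Delta$ is linear this reduces the problem to two subproblems: first, control the height and degree of a product $X \times Y$ under Segre (the standard identity $h(X \times Y) = h(X)\deg Y + h(Y)\deg X$ is exactly what produces the leading two terms of the claimed bound); second, iterate a \emph{single} basic inequality describing how height behaves when intersecting an irreducible variety with one hyperplane. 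This last step is Philippon's arithmetic B\'ezout for hyperplane sections, proved by comparing the Chow form $F_V$ with the Chow form of $V \cap \{L=0\}$ via specialization: the specialization is governed at each archimedean place by classical Mahler-measure inequalities (Gelfond--Mahler) of the type $M(fg) = M(f)M(g)$ and $\|f\cdot g\|_2 \le \binom{D}{\lfloor D/2 \rfloor}^{1/2} M(f)\|g\|_2$ or its variants, the binomial factors being what produces the $(m{+}1)\tfrac{\log 2}{2}$ constant after summing over the codimension.

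The main obstacle, and the delicate place where the combinatorial constant $\tfrac{(m+1)\log 2}{2}$ really has to come out right, is the archimedean step $(\mathrm{ii})$. One needs the sharp Landau/Gelfond inequalities between Mahler measure and sup/$\ell^2$ norms, and one must be careful that when cutting by the $m$ linear equations defining $\Delta$ the binomial losses telescope to $\tbinom{m+1}{\cdot}$ rather than something worse; this is the content of the clean ``$(m+1)\log 2/2$'' factor and is where technical refinements in \cite{patrice} improve the naive bound. The non-archimedean places contribute nothing to the error term because specialization at a finite place is non-expanding on Gauss-type norms, so ultrametric inequality immediately gives $h_v(F_{X\cap Y}) \le \deg(X) h_v(F_Y) + \deg(Y) h_v(F_X)$ with no extra constant. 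Summing over all places of a field of definition, accounting for components with multiplicity (which is harmless because we drop multiplicities on the left), and collecting the leading terms yields the stated inequality.
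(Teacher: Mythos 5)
The paper does not prove this statement: it is \emph{recalled} as a classical result (``We end this subsection by recalling two classical results on the normalised height\ldots'') with a citation to \cite{BGSGreen}, Theorem 5.5.1(iii), for the value of the constant $\tfrac{(m+1)\log 2}{2}$. So there is no proof of Theorem~\ref{AriBez} in the paper to compare your attempt against; what you have written has to be judged purely on its own merits against the cited literature.

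On that basis, a couple of remarks. Your sketch follows the Chow-form/resultant/Mahler-measure route of Philippon (\cite{patriceI}, \cite{patrice}): product via Segre, reduction of $X\cap Y$ to a diagonal slice, and iterated hyperplane sections with archimedean losses tracked by Gelfond--Mahler inequalities. That is a legitimate and well-documented path to an arithmetic B\'ezout theorem. However, it is \emph{not} the route of the reference the paper actually cites: \cite{BGSGreen} proves its Theorem~5.5.1 within arithmetic intersection theory on arithmetic schemes (arithmetic Chow groups, Green currents, the $*$-product, and properties of the Bost--Gillet--Soul\'e height), and no specialization of Chow forms appears in that argument. So if the intent was to reconstruct the proof that yields the specific constant the paper quotes, you are following a genuinely different approach, and matching the exact constant $(m+1)\tfrac{\log 2}{2}$ via Philippon's method is not automatic; the constants coming out of the Chow-form argument depend on the chosen normalization of $h$, and in Philippon's papers they take a slightly different shape.

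As a standalone sketch, the structure is sound, but the two places you flag as delicate are precisely where a complete proof lives and die: (i) the multiplicativity identity $h(X\times Y)=h(X)\deg Y+h(Y)\deg X$ under Segre is not a formality and holds in this clean form only for a carefully normalized height (this is one of the points that \cite{patrice} revisits), and (ii) the telescoping of the archimedean binomial losses to $(m+1)\tfrac{\log 2}{2}$ after cutting by the linear equations defining the diagonal is the technical crux and is deferred rather than carried out. As written, the argument is a plausible roadmap, not a proof; but since the paper itself supplies no proof and only a citation, this is appropriate to flag rather than to penalize.
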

For the constant $\frac{(m+1)\log 2}{2}$ {see \cite{BGSGreen}, Theorem 5.5.1 (iii).}
\begin{thm}[Zhang's inequality]\label{ZhangIneq}
Let $X\subseteq\P_m$ be an {irreducible algebraic subvariety.} Defining the essential minimum of $X$ as 
\[
 \mu(X)=\inf\{\theta\in\R\mid\{P\in X\mid h_2(P)\leq\theta\}\text{ is Zariski dense in }X\},
\]
 we have
\begin{equation*}
 \mu(X)\leq\frac{h(X)}{\deg X}\leq(1+\dim X)\mu(X).
\end{equation*}
\end{thm}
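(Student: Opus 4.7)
This is Zhang's classical inequality from his work on positive line bundles, and I would follow his strategy via arithmetic intersection theory.

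For the lower bound $\mu(X) \leq h(X)/\deg X$, the approach is by contradiction. Suppose there exists $\theta < h(X)/\deg X$ such that the set $\{P \in X : h_2(P) \leq \theta\}$ is contained in a proper closed subvariety $Z \subsetneq X$. The arithmetic Hilbert--Samuel theorem identifies $h(X)/\deg X$ with the limit of certain averaged heights of generic algebraic points of $X$ (more precisely, it relates $h(X)$ to the arithmetic volume of the ample bundle defining the embedding, which is computed by such averages). Since a generic point of $X$ must lie outside $Z$ and therefore have $h_2 > \theta$, the limiting average is at least $\theta$; letting the sequence of points be sufficiently generic gives the inequality $h(X)/\deg X \geq \mu(X)$ after passing to the infimum.

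For the upper bound $h(X)/\deg X \leq (1+\dim X)\mu(X)$, I would proceed by induction on $d = \dim X$ using the Arithmetic B\'ezout Theorem (Theorem \ref{AriBez}) as the principal tool. The case $d=0$ is immediate: $X = \{P\}$ gives $\deg X = 1$ and $h(X) = h_2(P) = \mu(X)$ by the conventions in Section \ref{SezioneAltezze}. For $d \geq 1$, by the definition of $\mu(X)$ one can find for every $\varepsilon > 0$ a Zariski-dense family of points $P \in X$ with $h_2(P) \leq \mu(X) + \varepsilon$. Pick such a $P$ together with a hyperplane $H \subseteq \P_m$ containing $P$ but not $X$, chosen so that $h(H)$ is controlled by $h_2(P) \leq \mu(X)+\varepsilon$. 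Then $X \cap H$ has dimension $d-1$, and the inductive hypothesis applied to an irreducible component of $X \cap H$ through $P$, combined with the Arithmetic B\'ezout bound
\[h(X\cap H) \leq \deg(X)\,h(H) + \deg(H)\,h(X) + \tfrac{(m+1)\log 2}{2}\deg(X)\deg(H),\]
yields, after rearranging and letting $\varepsilon \to 0$, the bound $h(X) \leq (1+d)\mu(X)\deg X$.

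The main obstacle is the lower bound: making the averaging step rigorous requires the full arithmetic Hilbert--Samuel theorem together with an equidistribution argument for generic points, which is substantially deeper than the upper bound. Since this paper treats Zhang's inequality as an input, a complete proof would simply cite Zhang's original article or the treatment of metrized line bundles and heights of cycles in \cite{patriceI} and \cite{patrice}.
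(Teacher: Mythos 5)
The paper does not prove Zhang's inequality; it is recalled as a classical fact (with pointers to the Philippon references for the definition of $h(X)$), so there is no in-paper argument to compare against. Judged on its own merits, your sketch is half right.

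Your treatment of the \emph{left} inequality $\mu(X)\leq h(X)/\deg X$ is conceptually on the right track: this is the ``existence of many small points'' direction, and it is indeed the one that requires arithmetic ampleness / arithmetic Hilbert--Samuel \`a la Zhang; citing the original works is the appropriate move here.

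The induction you propose for the \emph{right} inequality $h(X)/\deg X \leq (1+\dim X)\mu(X)$ does not work as written, for two reasons. First, there is a direction problem: Arithmetic B\'ezout gives $h(X\cap H) \leq \deg X\cdot h(H) + h(X) + c\deg X$, an \emph{upper} bound on $h(X\cap H)$ in terms of $h(X)$. You cannot rearrange that to bound $h(X)$ from above --- you would get $h(X) \geq h(X\cap H) - \deg X\cdot h(H) - c\deg X$, which is a lower bound, the opposite of what is needed. Second, even setting that aside, the inductive step silently assumes that an irreducible component $Y$ of $X\cap H$ passing through a small point $P$ has essential minimum $\mu(Y)$ comparable to $\mu(X)$; there is no reason for that, as $Y$ is forced to contain only the single small point $P$, not a Zariski-dense set of them. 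A correct elementary proof of this direction goes through Zhang's successive-minima inequality $e_1(X) \geq \hat c_1(\bar L)^{d+1}/((d+1)\deg X)$ rather than through B\'ezout, and it is genuinely different from the naive intersection-with-hyperplanes heuristic.
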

We also define a different essential minimum for subvarieties of $E^N$, that will be used in Subsection \ref{subsec-alttrasl}, as
\[
 \hat \mu(X)=\inf\{\theta\in\R\mid\{P\in X\mid \hat h(P)\leq\theta\}\text{ is Zariski dense in }X\}.
\]

By \eqref{stima_alte} and \eqref{altezza2altezza^}, these two definitions are related by the inequality
\begin{equation}\label{mu2mu^}
3\hat\mu(X)-3\cdue(E,N)\leq \mu(X)\leq 3 \hat\mu(X) + \cquindici(E,N)
\end{equation}
 where the constants $\cdue(E,N)$ and $\cquindici(E,N)$ are defined in \eqref{valC3} and \eqref{valc3} respectively.

\subsection{Algebraic Subgroups}\label{prelim-S}
{In this subsection we present the relationship between several different descriptions of the  algebraic subgroups of $E^N$.

Before explaining it in detail, we need to recall some classical tools in the geometry of numbers. Let $r$ and $N$ be positive integers, with $r\leq N$, and let $\Lambda$ be a lattice of rank $r$ in $\mathbb{R}^N$.  We define the determinant of $\Lambda$ as $\det\Lambda=\sqrt{\det(MM^t)}$, where $M\in\mathrm{Mat}_{r\times N}(\mathbb{R})$ is any matrix whose rows form a basis of $\Lambda$. We also define the successive minima $\lambda_i$ of $\Lambda$ as
\[
 \lambda_i=\inf\{t\in\R\mid\dim \langle B_t\cap\Lambda\rangle_\R=i\},
\]
where $B_t$ is the euclidean ball or radius $t$ centered at the origin.

The following theorem by Minkowski plays an important role in this setting.

\begin{thm}[Minkowski's second theorem]
Let $\Lambda$ be a lattice  of rank $r$. Then 
\begin{equation*}
 \frac{2^r}{r!}\det \Lambda \leq \omega_r \lambda_1\dotsm\lambda_r \leq 2^r \det \Lambda,
\end{equation*}
where the $\lambda_i$'s are the successive minima of $\Lambda$ and 
\begin{equation}\label{valomega}
 \omega_r=\frac{\pi^{r/2}}{\Gamma(r/2+1)}
\end{equation}
is the volume of the euclidean unit ball in $\R^r$ (here $\Gamma$ denotes the Euler $\Gamma$ function).
\end{thm}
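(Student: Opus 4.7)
The plan is to prove the two inequalities separately, since they have quite different characters: the lower bound is elementary, whereas the upper bound is the substantive content of Minkowski's theorem. First I would reduce to the full-rank case: $\Lambda$ spans an $r$-dimensional subspace $W\subseteq\R^N$, and equipping $W$ with the induced Euclidean structure makes $\det\Lambda$, the $\lambda_i$, and $\omega_r$ all intrinsic to $W$, so I may assume $\Lambda$ is a full-rank lattice in $\R^r$. For the lower bound, the definition of successive minima furnishes linearly independent $v_1,\dotsc,v_r\in\Lambda$ with $|v_i|=\lambda_i$, and the sublattice $\Lambda_0=\Z v_1+\dotsb+\Z v_r$ satisfies $\det\Lambda\le\det\Lambda_0\le\lambda_1\dotsm\lambda_r$ (the first inequality because $\Lambda_0\subseteq\Lambda$, the second by Hadamard). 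Combining with the elementary comparison $\omega_r\ge 2^r/r!$, which follows from the inclusion of the cross-polytope $\{x\in\R^r:\sum|x_i|\le 1\}$, of volume $2^r/r!$, inside the Euclidean unit ball, this yields $(2^r/r!)\det\Lambda\le\omega_r\lambda_1\dotsm\lambda_r$.

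For the upper bound, the strategy is to produce an origin-symmetric convex body $K\subseteq\R^r$ of volume $\omega_r\lambda_1\dotsm\lambda_r$ whose open interior contains no nonzero lattice point, and then to invoke Minkowski's first (convex body) theorem, which returns $\mathrm{vol}(K)\le 2^r\det\Lambda$. The construction starts from the successive-minima vectors $v_1,\dotsc,v_r$: I would apply Gram--Schmidt to obtain an orthonormal basis $e_1,\dotsc,e_r$ adapted to the flag $V_1\subset\dotsb\subset V_r$ with $V_i=\mathrm{span}(v_1,\dotsc,v_i)$, and then define $K$ by anisotropic rescaling along this flag so that its volume recovers $\omega_r\lambda_1\dotsm\lambda_r$.

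The hard part is proving that $K$ contains no nonzero lattice point in its interior. The argument proceeds inductively along the flag: a hypothetical $w\in\Lambda\setminus\{0\}$ in the interior of $K$ must lie in some $V_k\setminus V_{k-1}$, and the shape of $K$ must force $|w|<\lambda_k$; together with $v_1,\dotsc,v_{k-1}$ this supplies $k$ linearly independent lattice vectors of Euclidean norm strictly below $\lambda_k$, contradicting the definition of the $k$-th successive minimum. The genuine subtlety for the Euclidean ball is that a naive ellipsoid with semi-axes $\lambda_i/2$ in the $e_i$-directions does not suffice, since a vector satisfying the ellipsoidal constraint need not be short enough in Euclidean norm; the standard remedy is an approximation or deformation argument (polytopal truncation, a limiting family of bodies, or a volume-compression along the flag), and this rigidity-versus-volume balance is the technical heart of the theorem.
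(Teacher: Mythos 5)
The paper does not prove this statement: it is quoted as the classical Minkowski second theorem, with the constant $\omega_r$ being the volume of the unit ball, and is used as a black box in the geometry-of-numbers arguments of Sections \ref{prelim-S} and \ref{proofmain}. So there is no paper proof to compare against; I can only assess your argument on its own terms.

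Your lower bound is correct and complete. Reducing to a full-rank lattice in $\R^r$, choosing linearly independent $v_1,\dotsc,v_r\in\Lambda$ with $|v_i|=\lambda_i$ (which exist by discreteness), and observing that the index formula gives $\det\Lambda\leq\det\Lambda_0$ while Hadamard gives $\det\Lambda_0\leq\lambda_1\dotsm\lambda_r$, combined with the cross-polytope inclusion $\{\sum|x_i|\leq 1\}\subseteq B_1$ giving $2^r/r!\leq\omega_r$, does yield $(2^r/r!)\det\Lambda\leq\omega_r\lambda_1\dotsm\lambda_r$. This is a legitimate (if lossy) derivation of that side.

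The upper bound, however, is not proved. You correctly identify the target: exhibit an origin-symmetric convex body $K$ with $\mathrm{vol}(K)=\omega_r\lambda_1\dotsm\lambda_r$ whose interior avoids $\Lambda\setminus\{0\}$, then invoke Minkowski's first theorem. But the body is never constructed. You note that the linear anisotropic rescaling along the Gram--Schmidt flag (the naive ellipsoid) fails because a point with small coordinates relative to the scaled axes need not have small Euclidean norm, and you then gesture at ``polytopal truncation, a limiting family of bodies, or a volume-compression along the flag'' as the fix. That is precisely the content of the theorem: the classical proofs (Minkowski's original, Weyl's, Davenport's, or Siegel's treatment) each construct an explicit volume-preserving or volume-nondecreasing, typically piecewise-affine, compression map $\phi$ from $\lambda_r B_1$ (or a related region) that squeezes the ball along the flag so that $\phi(\lambda_r B_1)$ is free of nonzero lattice points while its volume is at least $\omega_r\lambda_1\dotsm\lambda_r$. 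Without writing down such a map and verifying both the lattice-point-free property (by the inductive argument you sketch) and the volume count, the upper bound remains an unproved assertion. One small sign of the looseness: you speak of an ellipsoid with semi-axes $\lambda_i/2$, but the body whose volume should equal $\omega_r\lambda_1\dotsm\lambda_r$ would need semi-axes $\lambda_i$; the factor $2^r$ is meant to come from Minkowski's first theorem, not to be built into $K$.

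So: the lower bound stands, but the upper bound has a genuine gap exactly where you flag the ``technical heart,'' and the proof is incomplete until that construction is supplied.
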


\bigskip

It is well known that abelian subgroups of codimension $r$, matrices of rank $r$ in $\mathrm{Mat}_{r\times N}(\mathrm{End}(E))$, and lattices of rank $r$ in $(\mathrm{End}(E))^N$ are essentially representations of the same objects up to some torsion subgroup; analogously, their degree, minors and  successive minima can be related up to constants. 

In more  detail,
let $B+\zeta$ be an irreducible torsion variety of $E^N$  of codimension $\codim\sotto=r$ and let $\pi_B:E^N\to E^N/B$ be the natural projection.
We know that $E^N/B$ is isogenous to $E^r$; let $\varphi_\sotto:E^N \to E^{r}$ be the composition of $\pi_B$ and this isogeny.

We  associate  $\sotto$ with the morphism $\varphi_\sotto$ and we have that $\ker \varphi_\sotto=\sotto+\tau$ with $\tau$ a torsion  subgroup whose cardinality is absolutely bounded (by \cite{Masserwustholz} Lemma 1.3).}
Obviously $\varphi_\sotto$ is identified with a matrix in $\mathrm{Mat}_{r\times N}(\mathrm{End}(E))$ of rank $r$.   Using basic geometry of numbers, we can choose the matrix   representing $ \varphi_\sotto$ such that the degree of   $\sotto$ is essentially the product of  the squares of the norms  of the rows of the matrix.

More precisely, given $B\subseteq E^N$ an algebraic subgroup of rank $r$, we associate it with a matrix in $\mathrm{Mat}_{r\times N}(\mathrm{End}(E))$ with rows  $u_1,\dotsc,u_r$ 
such that the euclidean norm $|u_i|$ of $u_i$ equals the $i$-th successive minimum of the lattice $\Lambda=\langle u_1,\dotsc, u_r\rangle_{\mathbb{Z}}$. In Subsection \ref{sec-c6} we show that there is a constant $\ccinque(N,r)$ such that
\begin{equation}\label{gradorighe}
 \deg B\leq \ccinque(N,r)\prod_{i=1}^r|u_i|^2
\end{equation}
and, when $E$ {is non} CM, we have
\[\ccinque(N,r)=3^N N!\left(\frac{3}{2}(N+1)12^{N-1}\right)^r.\]

Combining bound \eqref{gradorighe} and Minkowski's theorem one can relate the degree of an  algebraic subgroup and the determinant of the associated lattice (\emph{i.e.} the lattice generated by the rows of the associated matrix);  when the curve $E$ is non CM the following explicit bound holds:
\begin{equation}\label{gradodet}
 \deg B\leq \ccinque(N,r)\frac{4^r}{\omega_r^2}(\det\Lambda)^2,
\end{equation}
 where $\ccinque(N,r)$ is given above.

\section{The main result}\label{tai}
In this section we give the proof of our main result, which is Theorem \ref{teorema} of the Introduction. 
The constants  that we obtain are always effective and also explicit in the non CM case.

{

The proof of our main theorem is  based on the following idea: given a point $P\in E^N$ which is contained in a torsion variety of dimension $1$, we construct, by means of the geometry of numbers, another abelian subvariety $H\subseteq E^N$ of dimension $1$ so that the degree $\deg H$ and the height of the translate $H+P$ are both well controlled. An application of the Arithmetic B\'ezout Theorem, recalled in Subsection \ref{SezioneAltezze}, leads then to the end of the proof.
We will show in the technical Section \ref{proofmain} how to construct the auxiliary algebraic subgroup $H$; in order to compute all the constants explicitly we need several pages of careful  computations, which we have collected in Section \ref{section3}.

The overall construction of the auxiliary  algebraic subgroup $H$ is summarised in the following propositions, whose proofs are postponed to  Section \ref{proofmain}.
}

\begin{propo}[Non CM Case]\label{main}
Let $E$ be a {non CM} elliptic curve and let $1\leq m \leq N$ be  integers. Let $P=(P_1,\dotsc,P_N)\in B\subseteq E^N$, where $B$ is a torsion variety of dimension $\leq m$.
Say $s$ is an integer with $1\leq s\leq N$ and $T\geq 1$ a real number.

Then there exists an abelian subvariety $H$ of codimension $s$ such that
\begin{align*}
\deg(H+P)&\leq \ccinque(N,s) T\\
h(H+P)&\leq \ctre(N,m,s)T^{1-\frac{N}{ms}}\hat h(P)+\cquattro(E,N,s)T;
\end{align*}
 where 
 \begin{align*}
  \ccinque(N,s)&=3^N N! \left( \frac{3}{2}(N+1)12^{N-1}\right)^s\\
  \ctre(N,m,s)&=m^3 (m!)^4 \binom{N+m}{N}\frac{3sN^2(N-s+1)4^{3N-m+1}}{(\omega_s\omega_{N-s}\omega_N)^2}\ccinque(N,s)\\
  \cquattro(E,N,s)&=3N(N-s+1)\left(2\log 2+\frac{\log 3}{6}+ h_{\mathcal W}(E)\right)\ccinque(N,s).
 \end{align*}
  Here $ \omega_r=\pi^{r/2}/\Gamma(r/2+1)$ is the volume of the euclidean unit ball in $\R^r$, and $h_{\mathcal W}(E)$ is defined in Section \ref{SezioneAltezze}.
\end{propo}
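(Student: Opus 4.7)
The plan is to adapt Habegger's geometry-of-numbers construction \cite{hab} to the elliptic setting through the dictionary between abelian subvarieties of codimension $s$ in $E^N$ and sublattices of rank $s$ in $\mathbb{Z}^N = \mathrm{End}(E)^N$ recalled in Subsection \ref{prelim-S}. For any $u=(u_1,\ldots,u_N)\in \mathbb{Z}^N$, write $u\cdot P := [u_1]P_1 + \dotsb + [u_N]P_N \in E$; the quadratic nature of $\hat h$ gives the basic estimate $\hat h(u\cdot P)\leq |u|^2\hat h(P)$, while the hypothesis $P\in B$ with $\dim B\leq m$ translates into the statement that the integer lattice $\Lambda_B:=\{u\in\mathbb{Z}^N\mid u\cdot P \text{ is torsion}\}$ has rank at least $N-m$, so that the N\'eron-Tate pairing restricted to the subgroup of $E$ generated by $P_1,\ldots,P_N$ has rank at most $m$.

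I would then introduce on $\mathbb{R}^N$ the weighted Euclidean norm $\|u\|_\tau^2:=|u|^2+\tau\,\hat h(u\cdot P)$, depending on a parameter $\tau$ (an appropriate power of $T$) to be fixed at the end of the argument to balance the two error terms in the target estimate. The corresponding unit ellipsoid in $\mathbb{R}^N$ has volume of order $\omega_N/\sqrt{\det Q_\tau}$, where $Q_\tau$ is the Gram matrix of $\|\cdot\|_\tau$; since the height pairing has rank at most $m$, AM-GM yields $\det Q_\tau\leq (1+\tau\hat h(P)/m)^m$. Applying Minkowski's second theorem to the integer lattice $\mathbb{Z}^N$ equipped with $\|\cdot\|_\tau$ produces $s$ linearly independent vectors $u_1,\ldots,u_s$ whose weighted lengths, and hence both the product $\prod_{i=1}^s |u_i|^2$ and the sum $\sum_{i=1}^s \hat h(u_i\cdot P)$, are controlled in terms of $\omega_s,\omega_{N-s},\omega_N$ and $\tau$. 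I would take $H$ to be the abelian subvariety of codimension $s$ associated to $u_1,\ldots,u_s$ via the dictionary of Subsection \ref{prelim-S}.

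The degree bound follows at once from \eqref{gradorighe}: $\deg(H+P)=\deg H\leq \ccinque(N,s)\prod_{i=1}^s|u_i|^2$, which is at most $\ccinque(N,s)T$ once $\tau$ is chosen so that $\prod|u_i|^2\leq T$. For the height bound, the idea is to realise $H+P$ as an irreducible component of the intersection $\bigcap_{i=1}^s\varphi_{u_i}^{-1}(u_i\cdot P)$ of $s$ translated codimension-one subvarieties in $E^N$, where $\varphi_{u_i}\colon E^N\to E$ is the morphism associated to the vector $u_i$ whose degree and height are computed explicitly in Section \ref{section3}. Iterating the Arithmetic B\'ezout Theorem \ref{AriBez} $s-1$ times, each factor contributes a term proportional to $h_2(u_i\cdot P)\leq 3\hat h(u_i\cdot P)+\cquindici(E,1)\leq 3|u_i|^2\hat h(P)+\cquindici(E,1)$ by \eqref{altezza2altezza^}. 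Summing and then optimising $\tau$ produces the claimed exponent $1-N/(ms)$: intuitively, the $N-m$ ``free'' directions already inside $\Lambda_B$ contribute no $\hat h(P)$-term, and the remaining minima are distributed among the two contributions so as to minimise the resulting bound.

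The main obstacle will be the explicit bookkeeping of constants along this chain: the Minkowski factors $\omega_s,\omega_{N-s},\omega_N$; the degree constant $\ccinque(N,s)$ from \eqref{gradorighe}; the Chow-ring factor $3^N N!$ of Lemma \ref{defic2}; the B\'ezout correction $3^N\log 2/2$ (from the ambient $\mathbb{P}_{3^N-1}$); the binomial $\binom{N+m}{N}$ arising when summing degrees across the $s-1$ iterations of \ref{AriBez}; and the Zimmer/Weierstrass correction $h_{\mathcal W}(E)+2\log 2$ from \eqref{stima_zimmer}. All of these combine to reassemble the stated $\ccinque(N,s)$, $\ctre(N,m,s)$, $\cquattro(E,N,s)$; the explicit degree and height formulas for the morphisms $\varphi_{u_i}$ are established in Section \ref{section3}, and the full technical execution is carried out in Section \ref{proofmain}.
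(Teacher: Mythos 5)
Your geometry-of-numbers setup is essentially the same as the paper's, just unrolled: the weighted norm $\|u\|_\tau^2 = |u|^2 + \tau\,\hat h(u\cdot P)$ together with the rank-$\le m$ observation for the height pairing and Minkowski's second theorem is exactly what is packaged, in the paper, into Lemma \ref{LemmaHab1} (which supplies the $m$ linear forms $L_1,\dotsc,L_m$ controlling $\hat h(u\cdot P)$ up to the constant $\cdiciassette(N,m)$, using \cite{ioannali}, Lemma~3) and Lemma \ref{LemmaHab3} (Habegger's balanced Minkowski statement, which is where the $\binom{N+m}{N}^{1/2}$ and the exponent $1-\tfrac{N}{ms}$ come from). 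So that part of your plan would work, though the constant $\binom{N+m}{N}$ you expect to see does \emph{not} come from iterating B\'ezout as you suggest at the end — it is already present in the Minkowski-type lemma (the paper cites formulae~(27)--(28) of \cite{hab}, Lemma~2).

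For the height of $H+P$ you take a genuinely different route, and there is a gap. You want to realise $H+P$ as a component of $\bigcap_{i=1}^s\varphi_{u_i}^{-1}(u_i(P))$ and iterate Arithmetic B\'ezout, bounding each factor's contribution by $h_2(u_i(P))\le 3\hat h(u_i(P))+\text{const}$. That last estimate bounds the height of the \emph{point} $u_i(P)\in E$; what Arithmetic B\'ezout needs is the normalised height of the \emph{hypersurface} $\varphi_{u_i}^{-1}(u_i(P))\subseteq\P_{3^N-1}$, which is a translated algebraic subgroup of codimension one. Those are not the same thing, and getting the latter would require precisely the sort of essential-minimum argument you are trying to avoid (Zhang's inequality gives $h$ of a translate in terms of $\hat\mu$, and one still needs to identify $\hat\mu$ with the N\'eron--Tate height of the ``orthogonal part'' of $P$). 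The paper does exactly this once, globally, via Proposition \ref{stimaaltezzanormalizzatatraslato}: decompose $P=P_0+P^\perp$ with $P_0\in H$, $P^\perp\in H^\perp$, invoke the identity $\hat\mu(H+P)=\hat h(P^\perp)$ from \cite{preprintPhilippon}, control $\hat h(P^\perp)$ by the adjugate-matrix trick of Lemma \ref{lemmaadjugate} and Hadamard, and finish with Zhang, \eqref{mu2mu^}, \eqref{gradorighe}, \eqref{gradodet}, and Minkowski. Your iterated-B\'ezout plan might be made to work if you first prove a hypersurface-height bound for each $\varphi_{u_i}^{-1}(u_i(P))$, but as written the key ingredient is missing, and the constants you list as coming from the iteration (in particular the B\'ezout correction $3^N\log 2/2$) do not actually appear in the statement of Proposition \ref{main} — they enter only later, in the deduction of Theorem \ref{teorema}.
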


\begin{propo}[CM Case]\label{mainCM}
Let $E$ be a CM elliptic curve  and let $1\leq m \leq N$ be  integers. Let $P=(P_1,\dotsc,P_N)\in B\subseteq E^N$, where $B$ is a torsion variety of dimension $\leq m$.
Say $s$ is an integer with $1\leq s\leq N$ and $T\geq 1$ a real number.

Then there exists an abelian subvariety $H$ of codimension $s$ such that
\begin{equation*}
h(H+P)\leq \csei T^{1-\frac{N}{ms}}\hat h(P)+\csette T
\end{equation*}
and
\begin{equation*}
\deg(H+p)\leq \cotto T,
\end{equation*}
where $\csei, \csette,\cotto$ are effective positive constants depending only  on  the integers $N,m,s$, the ring $\mathrm{End}(E)$ and the height $h_{\mathcal W}(E)$ (defined in Section \ref{SezioneAltezze}).
\end{propo}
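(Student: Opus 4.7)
The plan is to mirror the argument of Proposition \ref{main}, adapting every step to the fact that $\emor(E)$ is now an order $\mathcal O$ in an imaginary quadratic field rather than $\Z$. Thus $\mathrm{Hom}(E^N,E)\cong \mathcal O^N$ is a free $\mathcal O$-module of rank $N$, and simultaneously a free $\Z$-module of rank $2N$ via the embedding $\mathcal O \hookrightarrow \C$ coming from the complex structure on $E$. An abelian subvariety $H\subseteq E^N$ of codimension $s$ corresponds (up to a controlled torsion subgroup) to an $\mathcal O$-submodule of $\mathcal O^N$ of rank $s$, and the analogue of bound \eqref{gradorighe} has the form $\deg H\leq c(N,s)\prod_{i=1}^s |\psi_i|^2$ for morphisms $\psi_1,\dotsc,\psi_s$ generating this submodule, where the constant $c(N,s)$ now depends also on the discriminant of $\mathcal O$ and on the covolume of $\mathcal O$ in $\C$.

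The core step is again a geometry-of-numbers argument. One introduces, as in the non-CM proof, a $\Z$-lattice inside $\mathcal O^N\otimes_\Z \R \simeq \R^{2N}$ obtained by equipping $\psi\in\mathcal O^N$ with a norm of the form $\bigl(|\psi|^2+T^{N/(ms)}\hat h(\psi(P))\bigr)^{1/2}$. Minkowski's second theorem applied to this lattice yields successive minima $\lambda_1\leq\dotsb\leq\lambda_{2N}$ whose product is controlled by the determinant, which is in turn bounded in terms of $\hat h(P)$ thanks to the hypothesis $P\in B$ with $\dim B\leq m$: at most $2m$ of the minima see a contribution from the height term, the remaining $2(N-m)$ coming from morphisms trivial on $B$ and hence on $P$ up to torsion. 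From the $\Z$-lattice vectors realising the first minima one extracts $s$ $\mathcal O$-linearly independent morphisms $\psi_1,\dotsc,\psi_s$; we then set $\varphi_H=(\psi_1,\dotsc,\psi_s)\colon E^N \to E^s$ and take $H$ to be the connected component of $\ker\varphi_H$ containing the identity.

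Degree and height bounds now follow as in the non-CM case. The CM analogue of \eqref{gradorighe} gives $\deg(H+P)=\deg H \leq \cotto T$. For the height, the control on $\hat h(\varphi_H(P))$ coming from the geometry-of-numbers construction translates, via the Arithmetic B\'ezout Theorem (Theorem \ref{AriBez}), Zhang's inequality (Theorem \ref{ZhangIneq}) and the explicit comparisons between $h$, $h_2$ and $\hat h$ recalled in Subsection \ref{SezioneAltezze}, into the desired bound $h(H+P)\leq \csei T^{1-N/(ms)}\hat h(P)+\csette T$.

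The main technical obstacle, and the reason the constants $\csei,\csette,\cotto$ are stated only as effective and not explicit, lies in the $\mathcal O$-versus-$\Z$ bookkeeping. A set of $\Z$-linearly independent vectors in $\mathcal O^N$ need not be $\mathcal O$-linearly independent, so one has to either select $s$ $\mathcal O$-independent vectors out of the first $2s$ $\Z$-short vectors (losing a factor depending only on $\mathcal O$) or work with the notion of successive minima for lattices over Dedekind domains. The constants lost in this step, together with those appearing in the CM analogue of \eqref{gradorighe} and in the Minkowski determinant on a $\Z$-lattice inside $\mathcal O^N$, all carry quantities (discriminant of $\mathcal O$, class number, unit index) whose explicit form would substantially clutter the presentation without adding conceptual content.
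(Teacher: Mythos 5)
Your proposal captures the right spirit --- adapt the non-CM argument, identify where $\emor(E)=\Z$ was used, and replace those pieces with $\mathcal O$-module analogues --- but it diverges from the paper's actual proof in both structure and detail, and it contains one genuine inaccuracy.

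The paper's proof of Proposition \ref{mainCM} is deliberately terse: it points out that exactly three ingredients of the non-CM proof need to change. First, in the degree bound of Subsection \ref{sec-c6}, formula \eqref{combinazioneNpuntigrado} is replaced by the CM analogue already set up in Remark \ref{remarkCMgradi}. Second, in the height bound of Subsection \ref{subsec-alttrasl}, the classical second theorem of Minkowski on lattices in $\R^N$ is replaced by a Minkowski-type statement valid over an order in an imaginary quadratic field, and the paper cites Theorem 3 of Bombieri--Vaaler \cite{BomVal} for this; the linear algebra with the adjugate matrix goes through verbatim over $\C$. Third, in Subsection \ref{subsec-geomnum}, Lemma \ref{LemmaHab3} (Habegger's geometry-of-numbers lemma) already holds independently of CM, while Lemma \ref{LemmaHab1} requires replacing Lemma 3 of \cite{ioannali} by Proposition 2 of the same paper. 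Your proposal instead proposes to unroll Habegger's lemma from scratch on a $\Z$-lattice of rank $2N$ inside $\mathcal O^N\otimes_\Z\R$ with a single ``combined'' norm, and then wrestle with the mismatch between $\Z$-linear and $\mathcal O$-linear independence of the short vectors. This is a workable alternative route, but it is not the one the paper takes: the paper keeps the two-lemma modular structure of the non-CM case and outsources the $\mathcal O$-lattice Minkowski theory to \cite{BomVal}, precisely so as to avoid the bookkeeping you describe. Your closing remark that one could ``work with the notion of successive minima for lattices over Dedekind domains'' is, in effect, what the paper does.

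One concrete error: you invoke the Arithmetic B\'ezout Theorem in the derivation of the height bound for $H+P$. B\'ezout does not appear in the proof of Propositions \ref{main} or \ref{mainCM}; the height of $H+P$ is controlled through Zhang's inequality, the adjugate-matrix linear algebra of Subsection \ref{subsec-alttrasl}, and the explicit comparisons between $h$, $h_2$ and $\hat h$. Arithmetic B\'ezout only enters later, in the deduction of Theorem \ref{teorema} from these propositions. This conflation should be removed.
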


We now show how to deduce {Theorem} \ref{teorema} from Propositions \ref{main} and  \ref{mainCM}.

\begin{proof}[Proof of Theorem \ref{teorema}]
    If $N=2$, then the codimensional inequality (ii) at page 1  tells us that  the only $V$-torsion anomalous points  are the torsion points contained in $V$, which have height zero. We can now assume that $N\geq 3$.

The point $P$ is a component of the intersection $V\cap (B+\zeta)$, where $B+\zeta$ is a torsion variety of $\dim B=1$.

Assume first that $E$ is non CM.
Let $T$ be a free parameter that will be specified later; we apply Proposition \ref{main} to $P$, $T$, $m=1$ and $s=N-1$. This gives a translate $H+P$ of dimension $\dim (H+P)=1$, of degree bounded in terms of $T$ and  such that $h(H+P)$ is bounded solely in terms of $\deg H$ and $\hat h(P)$.

Explicitely, if 
\begin{align*}
 \cdieci(N)&=\ccinque(N,N-1)=3^N N! \left(\frac{3}{2}(N+1)12^{N-1}\right)^{N-1}\\
 \cnove(N)&=\ctre(N,1,N-1)=\frac{3}{2}\frac{N^2(N^2-1)64^N}{(\omega_N\omega_{N-1})^2}\cdieci(N)\\
 \cundici(E,N)&=\cquattro(E,N,N-1)=6N(h_{\mathcal W}(E)+2\log 2+\frac{1}{6}\log 3)\cdieci(N),
\end{align*}
then the degree and the height of the translate $H+P$  are bounded by  Proposition \ref{main} as
 \begin{equation*}
 \deg (H+P)\leq \cdieci(N)T
\end{equation*}
 and 
\begin{equation*}
 h(H+P)\leq \frac{\cnove(N)}{ T^{1/(N-1)}}\hat h(P)+\cundici(E,N) T.
\end{equation*}

We want to prove that $P$ is a component of $V\cap (H+P)$.
If not, then $H+P\subseteq V$  because $\dim (H+P)=1$. In addition $\dim(B+H+\zeta)\leq 2$, as $\dim B=1$. Since $N\ge3$, the torsion variety $B+H+\zeta$ is proper. Thus $H+P\subseteq V\cap (B+H+\zeta)$ would be $V$-torsion anomalous, contradicting the maximality of $P$.

This means that $P$ is a component of $V\cap (H+P)$. In order to bound the height of $P$ we can apply the  Arithmetic B\'ezout Theorem to the irreducible varieties $V$ and $H+P$. We have 
\begin{equation}\label{boundpunto51b}
h_2(P)\leq h(V)\cdieci(N)T+\deg V \left( \frac{\cnove(N)}{ T^{1/(N-1)}}\hat h(P)+\cundici(E,N) T \right) +\frac{3^N \log 2}{2} \cdieci(N) T \deg V.
\end{equation}

We now choose 
\begin{equation*}
 T=\left(\frac{N}{N-1}\frac{\cnove(N)}{3} \deg V\right)^{N-1},
\end{equation*}
so that the coefficient of $\hat h(P)$ at the right-hand side of \eqref{boundpunto51b} becomes $3(N-1)/N$. Recall that by \eqref{stima_alte}
\[\hat{h}(P)\leq \frac{h_2(P)}{3}+\cdue(E,N)\]
where $\cdue(E,N)$ is the explicit constant in \eqref{valC3} depending only on the coefficients of $E$ and on $N$. Then we get
\begin{align*}
3 \hat{h}(P)\leq& \frac{3(N-1)}{N} \hat{h}(P)+3\cdue(E,N)+\cdieci(N)Th(V) +\\
&+\left(\frac{3^N \log 2}{2} \cdieci(N) +\cundici(E,N)\right)T \deg V,
\end{align*}
and hence 
\begin{equation*}
 \hat{h}(P)\leq \frac{N}{3}\cdieci(N)Th(V)+\frac{N}{3}\left(\frac{3^N \log 2}{2} \cdieci(N) +\cundici(E,N)\right)T \deg V+N\cdue(E,N),
\end{equation*}
{which is the desired bound} for $\hat{h}(P)$.

This concludes the proof of the non CM case.
Notice that the application of Proposition \ref{main} is the only point in the proof where it is required that $E$ {is non} CM. Therefore the same argument, with the use of Proposition \ref{mainCM} instead of Proposition \ref{main}, proves the result in the CM case.
\end{proof}

\section{An application to the Effective Mordell-Lang Conjecture }\label{expli}

We now clarify the implications of our theorems on the Effective Mordell-Lang Conjecture, proving Theorem \ref{corMW} from the introduction.

 \begin{proof}[Proof of Theorem \ref{corMW}]
   
 The last part of the theorem is a direct consequence of the first part. Indeed if $E(k)$ has rank 1, then all points in $\Ci(k)$ have rank at most one.

We now prove the  first part of the theorem.
The points of rank zero are exactly the  torsion points; for these points the bound is trivially true because their height is zero. 

Consider first the case that $\Ci$ is weak-transverse and $N\ge3$. 
Let $P=(P_1,\dotsc,P_N)\in\Ci$ be a point of rank 1 and let  $g\in E$ be a generator of  $\Gamma_P=\langle P_1, \dots, P_N\rangle_\qe$. Then the coordinates of $P$ satisfy $a_i P_i=b_i g$ for some $a_i\neq 0$ and $b_i$ in $\mathbb{Z}$.
 Since  $P$ is not a torsion point, at least one of the $b_i$ must be different from zero; let's say that $b_1\neq 0$.
 Then $P$ lies on the algebraic subgroup $B$  in $E^{N}$ given by the intersection of the $N-1$ algebraic subgroups of equations $a_i b_1 X_i=a_1 b_i X_{1}$ for $i=2,\dotsc,N$. Note that the matrix of coefficients has obviously rank $N-1$, so the dimension of $B$ is one.
 
 Since $\Ci$ is weak-transverse, $P$ is a component of $\Ci\cap B$; thus for $N\ge3$ $P$ is $\Ci$-torsion anomalous and it has relative codimension $1$.
 In addition, on  weak-transverse curves all torsion anomalous points are maximal; thus $P$ is a maximal $\Ci$-torsion anomalous point of relative codimension 1.
We can now apply Theorem \ref{teorema} to $V=\Ci$ to obtain the height bound,   thus concluding the case of $N\geq 3$.

For $N=2$, the previous argument cannot be directly applied, indeed  a point of rank 1 is never torsion anomalous in $E^2$. We now show how to reduce the case of a transverse curve $\Ci\subseteq E^2$ to the previous case.
Let $P=(P_1,P_2)\in\Ci$ be a point of rank 1 and let  $g\in E$ be a generator of  $\Gamma_P=\langle P_1, P_2\rangle_\qe$.
Fix a positive real $\epsilon$, and choose an integer $M$ such that $\epsilon M^2\geq \hat h(g)\deg(\Ci)$.
Let $Q_M$ be a point in $E$ such that $MQ_M=g$; by our choice of $M$ we have that 
\[\hat h(Q_M)=\frac{\hat h(g)}{M^2}\leq \frac{\epsilon}{\deg \Ci}.\]
We define $\Ci_M'=\Ci\times \{Q_M\}\subseteq E^{3}$ and $P_M'=P\times \{Q_M\}\in \Ci_M'$.
Since $\Ci$ is transverse in $E^2$, $\Ci_M'$ is  weak-transverse in $E^3$. Notice that  $P_M'\in \Ci_M'$ is a point of rank $1$ and  $$\hat h(P)\leq \hat h(P_M').$$

 In addition $\deg \Ci_M'=\deg \Ci$  and $\hat\mu(\Ci_M')=\hat\mu(\Ci)+\hat h(Q_M) $. By Zhang's inequality and \eqref{mu2mu^} we have
\begin{align*}\label{altezza}
h(\Ci_M')\leq 2\mu(\Ci_M')\deg\Ci\leq 2\deg\Ci\left(3\hat\mu(\Ci_M')+\cquindici(E,3)\right)=\\
=2\deg\Ci\left(3\hat\mu(\Ci)+3\hat h(Q_M)+\cquindici(E,3)\right)\leq\\
\leq 2\deg\Ci\left(\mu(\Ci)+3\cdue(E,3)+3\hat h(Q_M)+\cquindici(E,3)\right)\leq\\
\leq 2h(\Ci)+6\epsilon+6\deg\Ci\left(\cdue(E,3)+\frac{\cquindici(E,3)}{3} \right)
\end{align*}
where the constant $\cdue(E,3)$ is defined in \eqref{valC3} and $\cquindici(E,3)$ in \eqref{valc3}.

To bound $\hat{h}(P_M')$ and in turn $\hat{h}(P)$, we apply the first part of the theorem to  $\Ci_M' \in E^3$, obtaining:
\begin{align*} \hat{h}(P)\le  & 2C_1(3)h(\Ci)(\deg\Ci)^2+\left(C_2(E,3) +6\cdue(E,3)C_1(3)+2\cquindici(E,3)C_1(3) \right)(\deg\Ci)^3+\\
&+C_3(E,3)+6\epsilon  C_1(3)(\deg\Ci)^2.
\end{align*}

Clearly the point $P$  does not depend on the initial choice of $\epsilon$ and, letting $\epsilon$ go to zero, we get the  desired bound for the height.
 \end{proof}

\section{Rational points on an explicit family of curves}\label{expliCn}

  We now give an explicit method to find, in principle, all rational points on a family of curves in a power of a non CM elliptic curve.

Let $E$ be the elliptic curve defined by the Weierstrass equation
\[E: y^2=x^3+x-1.\]
With an easy computation one can check that 
\begin{align*}
\Delta(E)&=-496,\\
j(E)&=\frac{6912}{31},\\
h_\mathcal{W}(E)&=0,
\end{align*}
in particular the curve {is non} CM because $j(E)\not\in \Z$.
Furthermore the group $E(\Q)$ has rank 1 with generator $g=(1,1)$ and no non-trivial torsion points; this can be checked on a database of elliptic curve data  (such as \url{http://www.lmfdb.org/EllipticCurve/Q}).
The N\'eron-Tate height of the generator $g$ can be bounded from below, computationally, as
\begin{equation}\label{hg}
\hat h(g)\geq 1/4
\end{equation} (we used dedicated software (PARI/GP) which implements an algorithm with sigma and theta functions due to Silverman).

As  an application of our main result we give the proof of Theorem \ref{teoremacurveCn}, which is an example of the explicit Mordell Conjecture for a family of curves in $E^2$ of increasing genus and degrees. We recall the definition of the curves from the introduction. 
We write 
\begin{align*}
y_1^2=x_1^3+x_1-1\\
y_2^2=x_2^3+x_2-1
\end{align*}
for the equations of $E^2$ in $\P_2^2$, using affine coordinates $(x_1,y_1)\times (x_2,y_2)$ and we consider the family of curves $\{\Ci_n\}_n$ with $\Ci_n\subseteq E^2$ defined via the additional equation $$x_1^n=y_2.$$

\begin{proof}[Proof of Theorem \ref{teoremacurveCn}]
To prove the theorem we first show that the curves $\Ci_n$ are  irreducible and transverse in $E^2$ and then we apply Theorem  \ref{corMW} with $N=2$  and $E(\qe)$ of rank 1, computing all invariants of the case.

 The irreducibility of the $\Ci_n$ is easily seen to be equivalent to the primality of the ideal generated by the polynomials $y_1^2-x_1^3-x_1+1$ and $x_1^{2n}-x_2^3-x_2+1$ in the ring $\qe[x_1,x_2,y_1]$. This follows from an easy argument in commutative algebra. 

 Notice that in $E^2$ the only irreducible curves that are not transverse are translates, so curves of genus one. Thus, we need to show only that each $\Ci_n$ has genus at least 2;  in fact we  prove  that $\Ci_n$ has genus $4n+2$.

\medskip

Consider the morphism $\pi_n:\Ci_n\to\P_1$ given by the function $y_2$. The morphism $\pi_n$ has degree $6n$, because for a generic value of $y_2$ there are three possible values for $x_2$, $n$ values for $x_1$, and two values of $y_1$ for each $x_1$.

Let $\alpha_1,\alpha_2,\alpha_3$ be the three distinct roots of the polynomial $f(T)=T^3+T-1$; {let also $\beta_1,\beta_2,\beta_3,\beta_4$ be the four roots of the polynomial $27T^4+54T^2+31$, which are the values such that $f(T)-\beta_i^2$ has multiple roots.} Notice that none of the $\alpha_i$ can be equal to {the $\beta_j$} because they have different degrees.

The morphism $\pi_n$ {is ramified over $\beta_1,\beta_2,\beta_3,\beta_4,0,\alpha_1^n,\alpha_2^n,\alpha_3^n,\infty$.}
{Each of the points $\beta_i$ has} $2n$ preimages of index 2 and $2n$ unramified preimages. The point 0 has 6 preimages ramified of index $n$. The points $\alpha_i^n$ have 3 preimages ramified of index 2 and $6n-6$ unramified preimages.
The point at infinity is totally ramified.

By Hurwitz formula
\begin{align*}
2-2g(\Ci_n)&=\deg\pi_n (2-2g(\P_1))-\sum_{P\in \Ci_n}(e_P-1)\\
2-2g(\Ci_n)&=12n-(4\cdot 2n +6(n-1)+3\cdot 3 +6n -1)\\
g(\Ci_n)&=4n+2.
\end{align*}

Thus the family $\{\Ci_n\}_n$ is a family of transverse curves in $E^2$.
We can therefore apply Theorem \ref{corMW} with $N=2$ to each $\Ci_n$, which gives, for 
 $P\in \Ci_n(\mathbb{Q})$
 \begin{equation}\label{bhpunto}\hat h(P)\leq\left(2.364\cdot 10^{34}h(\Ci_n)+9.504\cdot 10^{35}\deg \Ci_n\right)(\deg \Ci_n)^2.\end{equation}

We now compute $\deg \Ci_n$ and $h(\Ci_n)$.

\medskip

 We can compute the degree of $\Ci_n$ as an intersection product. {Let $\ell, m$ be the classes of lines of the two factors of $\P_2^2$ in the Chow group. Then the degree of $\Ci_n$ is obtained multiplying the classes of the hypersurfaces cut by the equation $x_1^n=y_2$, which is $n\ell+m$, by the two Weierstrass equations of $E$, which are $3\ell$ and $3m$, and by the restriction of an hyperplane of $\P_8$, which is $\ell+m$.
In the Chow group
\[(n\ell+m)(3\ell)(3m)(\ell+m)=9(n+1)(\ell m)^2\] 
and then
\[\deg\Ci_n=9(n+1).\]
}
\smallskip

We estimate the  height of $\Ci_n$  using  Zhang's inequality and computing an upper bound for the essential minimum $\mu(\Ci_n)$ of $\Ci_n$.   To this aim, we construct an infinite set of points on $\Ci_n$ of bounded height. By the definition of essential minimum, this gives also an upper bound for $\mu(\Ci_n)$.

 Let $Q_\zeta=((x_1,y_1), (\zeta,y_2))\in \Ci_n$, where $\zeta \in \overline{\mathbb{Q}}$ is a root of unity. Clearly there exist infinitely many such points on $\Ci_n$. Denoting by $h$ the logarithmic Weil height on $\overline{\mathbb{Q}}$ and using the equations of $E$ and $\Ci_n$, we have:
\[h(\zeta)=0, h(y_2)\leq \frac{\log 3}{2}, h(x_1)\leq \frac{\log 3}{2n}, h(y_1)\leq \frac{\log 3}{n}+\frac{\log 3}{2}.\] 
 Thus
\[ h(x_1,y_1)\leq \log 3\left(\frac{n+3}{2n}\right), h(\zeta,y_2)\leq \frac{\log 3}{2}\]
and from \eqref{stima_altez}
\[ h_2(x_1,y_1)\leq \log 3\left(\frac{2n+3}{2n}\right), h_2(\zeta,y_2)\leq \log 3.\]
 So for all points $Q_\zeta$ we have
\[h_2(Q_\zeta)=h_2(x_1,y_1)+h_2(\zeta,y_2)\leq \log 3\left(\frac{4n+3}{2n}\right).\]  By the definition of essential minimum, we deduce \[\mu(\Ci_n)\leq \log 3\left(\frac{4n+3}{2n}\right)\]
and by Zhang's inequality 
$h(\Ci_n)\leq 2\deg\Ci_n\mu(\Ci_n)\le9(n+1)\log3\left(\frac{4n+3}{n}\right)$.

From formula \eqref{bhpunto}, if $P\in \Ci_n(\mathbb{Q})$ then 
\begin{equation}\label{esplicitoperCn}
\hat{h}(P)\leq 8.253\cdot 10^{38} (n+1)^3.
\end{equation}

Let us now write $P=([a]g,[b]g)$, with $g=(1,1)$  the generator of $E(\qe)$.
By the  definition of $\hat{h}$ on $E^2$ {(see \eqref{sumhcan})} and the properties of the N\'eron-Tate height, we have
\[
 \hat{h}(P)=(a^2+b^2)\hat{h}(g).
\]

{Now, by relations \eqref{BoundSilvermanAltezze}, \eqref{stima_zimmer} and $(x([a]g))^n=y([b]g)$ because $P$ is on the curve, we have}

\begin{align*}
 2na^2\hat{h}(g)&\leq n h(x([a]g))+2n\left(\frac{h(\Delta)}{12}+\frac{h_\infty(j)}{12}+1.07\right)\leq \\
 &=h(y([b]g))+5 n\leq h([b]g)+5 n\leq\\
 & \leq 3b^2\hat{h}(g)+6\log 2+5 n,
\end{align*}
and therefore
\[
 \left(\frac{2n}{3}+1\right)a^2\hat h(g)\leq \hat h(P)+2\log 2+\frac{5}{3}n.
\]
{Combining this with \eqref{esplicitoperCn} and the lower bound \eqref{hg} we obtain
\begin{equation*}
|a|\leq 7.037\cdot 10^{19}(n+1).
\end{equation*}
}

Using again \eqref{BoundSilvermanAltezze} and \eqref{stima_zimmer} as before, we get that
\begin{equation*}
2b^2\hat{h}(g)\leq 3n a^2 \hat{h}(g)+7\log 2+5
\end{equation*}
and from this and \eqref{hg} we get
\[|b|\leq \left(\frac{3na^2}{2}+14\log 2+10\right)^{\frac{1}{2}}.\qedhere\]
\end{proof}

\section{Estimates for degrees of maps}\label{section3}

The central aim of this section  is to produce sharp bounds for the degree of algebraic subgroups of $E^N$.  

 For example, consider the algebraic subgroup of codimension 1 defined by the morphism   $(l_{1},\cdots,l_{N}): E^N\to E$ sending $(X_1,\cdots, X_N) \mapsto l_{1}X_1+\cdots+l_{N}X_N$, where $l_1,\ldots, l_N\in \mathrm{End}(E)$. The degree of this subgroup is equal to a constant times  $|(l_{1}, \cdots,l_{N})|^2$. To get explicit results we need to compute this constant and, to this purpose, we have to describe the sum of two points and the multiplication by an integer on $E$ and $E^N$ in terms of rational maps. We will examine these maps in detail and bound their degrees. 
 These  bounds will be used to prove Propositions \ref{main} and \ref{mainCM},  which are the core of our main theorem.

We {briefly} anticipate here what is needed to prove the above mentioned propositions.
We consider an  algebraic subgroup $H$ of $E^N$ of codimension $s$. It is defined by $s$ equations
\begin{align*}
L_1&(X_1,\dotsc,X_N)=0, \\
&\vdots\\ 
L_s&(X_1,\dotsc,X_N)=0
\end{align*}
{where $L_i(X_1,\ldots,X_N)=l_{i_1}X_1+\cdots+l_{i_N}X_N$ are  morphisms from $E^N$ to $E$; here  the  coefficients are endomorphisms $l_{i_j}\in \mathrm{End}(E)$, and they are expressed by certain rational functions; similarily the $+$ that  appears in this expression is the addition map in $E^N$, which is expressed by a rational function of the coordinates.

More precisely, if the $X_i$'s are all points on $E$ with {affine} coordinates $(x_i,y_i)$ in $\P_2$, then $L_i(\mathbf{X})$ are also points on $E$ with coordinates in $\P_2$ $(x(L_i(\mathbf{X})), y(L_i(\mathbf{X})))$ which are rational functions of the  $x_i$'s and $y_i$'s.

The purpose of this section is to study the rational functions $x(L_i(\mathbf{X}))$ and  to bound the sums of the partial degrees (not only the partial degrees) of their numerators and denominators. The reason is that, in the  proof of the main theorem, we will need to study the image of the  algebraic subgroup $H$ in $\P_{3^N-1}$ via the Segre embedding, and when studying the effect of the Segre embedding on the functions defining the embedded variety, it is the sum of the partial degrees that comes into play.}

To this aim, we proceed in the  following way: in Subsection \ref{secpolinomi} we give bounds for the sums of the partial degrees of the product and the sum of quotients of polynomials in several variables.
Then, in Subsection \ref{secmoltiplicazionem} we estimate the multiplication map on $E$.
In Subsection \ref{eq_sub}, we study  the sum of many points on an elliptic curve. Finally, we estimate the sums of the partial degrees of $L_i(X_1,\ldots,X_N)$. 

All the computations are carried  out for linear combinations of points with integral coefficients (which is to say, when $E$ {is non} CM). In Remark \ref{remarkCMgradi} we describe how to adapt this to the CM case.

\subsection{Estimates for degrees of rational functions}\label{secpolinomi}
 In this short paragraph we {recall} how to bound the sums of the partial degrees of products and sums of quotients of polynomials in the field of rational functions.

If $\frac{f_i}{g_i}$ are rational functions, with $f_i,g_i$ polynomials in several variables and coefficients in $\Z$, we denote by $d(f_i/g_i)$ the maximum of the sums of the partial degrees of both $f_i,g_i$. Then
\begin{align}
\label{stimaprodottofunzioni}\frac{f}{g}&=\prod_{i=1}^r\frac{f_i}{g_i}  & d(f/g)&\leq \sum_{i=1}^r d(f_i/g_i)\\
\label{stimasommafunzioni}\frac{f}{g}&=\sum_{i=1}^r\frac{f_i}{g_i} & d(f/g)&\leq \sum_{i=1}^r d(f_i/g_i) 
\end{align}
where $d(f/g)$ is the bound for the sum of partial degrees of the product  (in \eqref{stimaprodottofunzioni}) and of the sum (in \eqref{stimasommafunzioni}) of the $f_i/g_i$'s respectively.

\subsection{The multiplication by $m$}\label{secmoltiplicazionem}

Let $m$ be a positive integer and let $E$ be an elliptic curve. The aim of this subsection is to bound the sum of the partial degrees of the rational function giving the multiplication by $m$ on $E$.
If $P=(x,y)\in E$, then by \cite{SilvermanArithmeticEllipticCurves}, Ex~3.7, p.~105
we have that
\begin{equation*}
[m]P=\left(\frac{\phi_m(P)}{\psi_m^2(P)},\frac{\omega_m(P)}{\psi_m^3(P)}\right)
\end{equation*}
where $\phi_m, \psi_m, \omega_m\in \mathbb{Z}[A,B,x,y]$ are certain polynomials defined below.

The polynomial $\psi_m$ is defined inductively as follows: 
\begin{align*}
\psi_1&=1, \psi_2=2y,\\
\psi_3 &= 3x^4 + 6Ax^2 + 12Bx-A^2,\\
\psi_4 &= 4y(x^6 + 5Ax^4 + 20Bx^3- 5A^2x^2 -4ABx-8B^2-A^3),
\end{align*}
and for $m\geq 2$
\begin{align*}
\psi_{2m+1} &=\psi_{m+2}\psi_m^3-\psi_{m-1}\psi_{m+1}^3, \phantom{rrr}(m\geq 2)\\
2y\psi_{2m}&=\psi_m(\psi_{m+2}\psi_{m-1}^2-\psi_{m-2}\psi_{m+1}^2) \phantom{rrr}(m\geq 2).
\end{align*} 
The polynomials $\phi_m$ and $\omega_m$ are defined as:
\[\phi_m=x\psi_m^2-\psi_{m+1}\psi_{m-1}\]
\[4y\omega_m=\psi_{m+2}\psi_{m-1}^2-\psi_{m-2}\psi_{m+1}^2.\]
As Silverman points out, one can prove that $\psi_m,\phi_m,y^{-1}\omega_m$ (for $m$ odd) and $(2y)^{-1}\psi_m, \phi_m, \omega_m$ (for $m$ even)  are polynomials in $\mathbb{Z}[A,B,x,y^2]$. Hence, using the Weierstrass equation for $E$ to replace $y^2$, they can be treated as polynomials in $\mathbb{Z}[A,B,x]$.

Moreover, as polynomials in $x$ we have
\[\phi_m(x)=x^{m^2}+\text{ lower order terms,}\]
\[\psi_m^2(x)=m^2 x^{m^2-1}+\text{ lower order terms.}\]

\medskip

We {need} to find bounds for the degrees of the polynomials $\psi_m, \phi_m, \omega_m$.

The following bounds are obtained combining the above definitions in \cite{SilvermanArithmeticEllipticCurves} {and the expressions for $\phi_m$ and $\psi_m^2$:}
\begin{align*}
 d(\phi_m)&=m^2\\
 d(\psi_m^2)&= m^2-1\\
 d(\psi_m)&\leq \frac{m^2+1}{2}\\
 d(\psi_m^3)&\leq \frac{3m^2-1}{2}\\
 d(\omega_m)&\leq  \frac{3}{2}(m^2+1).
\end{align*}

Using the above bounds and the formula for the coordinates of $[m]P$, we see that the sum of the partial degrees of the polynomials $\phi_m, \psi_m^2,\omega_m,\psi_m^3$, which are numerators and denominators of the rational functions given by the coordinates of $[m]P$, are bounded by $\frac{3}{2}(m^2+1)$.

\bigskip

\subsection{Estimates for linear maps}\label{eq_sub} 
We now look first at the functions giving the sum of two, and then many, points on an elliptic curve. Our aim is to obtain explicit bounds on the sum of the partial degrees of the rational function expressing a linear combination of $N$ points in $E$. 

Then we will study equations  defining  algebraic subgroups of $E^N$, obtained equating to zero linear combinations of $N$ variables with coefficients in $\mathrm{End}(E)$. 
Evaluating the functions at points in $E$ whose coordinates are themselves rational functions, we will bound the sum of partial degrees of these linear combinations, viewed as rational functions in {the} new coordinates.

\subsubsection{Estimates for the addition map}
We  consider an elliptic curve $E$ embedded in $\mathbb{P}_2$ via its Weierstrass equation.

Let $P_1=(x_1,y_1)$ and $P_2=(x_2,y_2)$ be points on $E$ and $P_3=(x_3,y_3)=P_1\oplus P_2$ be their sum.

 If $x_1\neq x_2$, from \cite{SilvermanArithmeticEllipticCurves}, Chap. 3, setting
\begin{align*}
\lambda&=\frac{y_2-y_1}{x_2-x_1}\\
\nu&=\frac{y_1x_2-y_2x_1}{x_2-x_1}
\end{align*}
we have that
\begin{equation}\label{somma2punti}
\begin{split}
x_3&=\lambda^2-x_1-x_2\\
y_3&=-\lambda x_3 -\nu.
\end{split}
\end{equation}

So $x_3$ and $y_3$ are rational functions of the coordinates of $P_1$ and $P_2$, and we  now want to control the sum of their partial degrees.

 Using \eqref{somma2punti}, if $(x_1,y_1)$ and $(x_2,y_2)$  already have coordinates given by  certain other rational functions,  whose sums of the partial degrees  are bounded  respectively by $d_1,d_2$, then the sums of the partial degrees, in the variables of $x_1,y_1,x_2,y_2$, of the functions $x_3,y_3$ are given by:
\begin{align*}
d(x_3)&\leq 5(d_1+d_2)\\
d(y_3)&\leq 12(d_1+d_2).
\end{align*}

 If instead $x_1=x_2$ and $y_1=y_2$, then setting 
\begin{equation*}
 \lambda=\frac{3x_1^2+A}{2y_1}
\end{equation*}
we have that
\begin{align*}
 x_3&=\lambda^2-2x_1\\
 y_3&=y_1+\lambda(x_3-x_1).
\end{align*}
Again applying \eqref{stimaprodottofunzioni}, \eqref{stimasommafunzioni}, if $P_1=P_2$ have coordinates given by rational functions in certain variables with sums of the partial degrees bounded by $d_1$, then $x_3$ and $y_3$ are rational functions, in the same variables, with sums of the partial degrees bounded as

\begin{align*}
d(x_3)&\leq 7d_1 \\
d(y_3)&\leq 11d_1 .
\end{align*}

If $x_1=x_2$ and $y_1=-y_2$, {then} the two points are opposite and the sum is the zero of the elliptic curve.

Comparing the bounds obtained in the three cases, one checks that in any case
\begin{align}\label{gradosomma2punti}
\max(d(x_3),d(y_3))&\leq 12(d_1+d_2)
\end{align}
holds.
This computation was carried out for the sum of two points. We now iterate it $M-1$ times to obtain bounds for the rational function expressing the sum of $M$ points.
It follows by induction from \eqref{gradosomma2punti} that

\begin{align}
\label{sommaNpuntigrado}d&\leq 12^{M-1} d_1 + \sum_{i=2}^{M} 12^{M-i+1}d_i\leq 12^{M-1} \sum_{i=1}^{M}d_i,
\end{align}
where $d_i$ is a bound for the sum of the partial degrees of the $x$ and $y$ coordinates of the $i$-th point.

\subsubsection{Estimates for group morphisms}\label{sec62}
Let us consider the {morphism $L:E^N\rightarrow E$ defined as} 
\begin{equation*}
L(\mathbf{X})=l_1X_1+\dotsb+l_NX_N
\end{equation*}  
{where $l_i\in \mathbb{Z}$ and $X_i=(x_i,y_i)$ is in the $i$-th factor of $E^N$.} Then $L(\mathbf{X})$ is also a point on $E$ with coordinates $(x(L(\mathbf{X})), y(L(\mathbf{X})))$. {By} the considerations above, {these} are rational functions in the coordinates $(x_i,y_i)$ of all $X_i$'s. We want to combine the results from the previous subsections to bound the sum of the partial degrees of the rational function $x(L(\mathbf{X}))$.

Let us set $d(L)=d(x(L(\mathbf{X})))$ to be the sum of the partial degrees in the numerator and denominator of $x(L(\mathbf{X}))$.

Now combining inequality \eqref{sommaNpuntigrado} with the bounds from Subsection \ref{secmoltiplicazionem} we obtain 
\begin{equation}\label{combinazioneNpuntigrado}
d(L)\leq\frac{3}{2}12^{N-1}\left(N+\sum_{i=1}^N|l_i|^2\right).
\end{equation}

\begin{remark}\label{remarkCMgradi}
The content of this section holds analogously  when $E$ is CM. In this case $\mathrm{End}(E)=\Z+\tau\Z$ for some imaginary quadratic integer $\tau$. 

 For a point $P=(x,y)$ we denote $\tau(P)=(x_\tau,y_\tau)$. Then $x_\tau$ and $y_\tau$ are rational functions of $x$ and $y$ and we  let $d_{CM}(\tau)$  be the sum of their partial degrees. Writing $\tau=\sqrt{-d}$ for some non-square  positive integer $d$ we have
\[d_{CM}(\tau)\leq 2d.\]
Since every element $l_i\in \mathrm{End}(E)$ can be written as $l_i=r_i+\tau s_i$, where $r,s\in \mathbb{Z}$, we can write 
\[
 L(\mathbf{X})=r_1X_1+\dotsb+r_NX_N+\tau s_1X_{N+1}+\dotsb+\tau s_NX_{2N}
\]
and,  using the above results, we can effectively compute a bound
\[
d(L)\leq D(E,N,\tau)\sum_{i=1}^N|l_i|^2\qquad \text{for $l_i\in \mathrm{End}(E)$ not all $0$,}
\]
corresponding to the bound \eqref{combinazioneNpuntigrado}. However we omit here the explicit computations. 
\end{remark}

\section{Conclusion}\label{proofmain}
{In this section we prove Proposition \ref{main} (explicit for non CM varieties)  and Proposition \ref{mainCM} (effective for CM varieties), 
which are  the core of the proofs of our main results. Essentially the {propositions} state that:} if a point $P\in E^N$ belongs to an  algebraic subgroup $B$  then we can construct a translate  $H+P\subseteq E^N$  of controlled degree and height.

To prove {these results}, we use the bounds for the height of Subsection \ref{SezioneAltezze} and for the degree of Section \ref{section3}. Then we use the geometry of {numbers}, to construct the  algebraic subgroup $H$.

 Before moving on with the main proof, we need a short section in linear algebra.
\subsection{A lemma on adjugate matrices}\label{secMatrices}
Let $A$ be a $n\times n$ matrix with complex coefficients. Let $a_i\in\C^n$ be the rows of $A$.
\begin{D}\label{defU*} The \emph{adjugate matrix of $A$}, denoted $A^*$, is  the transpose of the matrix $((-1)^{i+j}\det M_{ij})_{ij}$, where $M_{ij}$ is the $(n-1)\times(n-1)$ minor obtained from $A$ after deleting the $i$-th row and the $j$-th column.
\end{D}
The adjugate matrix has the property that
\[
AA^*=A^*A=(\det A)\mathrm{Id}
\]
and its entries are bounded as it follows:
\begin{lem}\label{lemmaadjugate}
Let $A\in M_{n\times n}(\C)$  be the matrix with rows $a_1,\dotsc,a_n\in\C^n$.
Then every entry in the $i$-th column of $A^*$ has absolute value bounded by \[\frac{\abs{a_1}\dotsm \abs{a_n}}{\abs{a_i}}.\]
\end{lem}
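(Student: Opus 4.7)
The plan is to read off the entries of the $i$-th column of $A^*$ directly from the definition and bound each of them with Hadamard's inequality applied to the corresponding $(n-1)\times(n-1)$ minor.

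First, by Definition \ref{defU*}, the entry in position $(j,i)$ of $A^*$ equals $(-1)^{i+j}\det M_{ij}$, where $M_{ij}$ denotes the $(n-1)\times(n-1)$ submatrix of $A$ obtained by removing the $i$-th row and the $j$-th column. In particular, the rows of $M_{ij}$ are obtained from the rows $a_1,\dotsc,a_{i-1},a_{i+1},\dotsc,a_n$ of $A$ by deleting their $j$-th coordinate. Consequently each such row has euclidean norm at most $|a_k|$, where $a_k$ is the corresponding row of $A$.

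Next, I apply Hadamard's inequality: for any square matrix $M$ with rows $v_1,\dotsc,v_{n-1}$, one has $|\det M|\leq |v_1|\dotsm|v_{n-1}|$. Applied to $M_{ij}$, this gives
\[
|\det M_{ij}|\leq \prod_{k\neq i}|a_k|=\frac{|a_1|\dotsm|a_n|}{|a_i|}.
\]
Since the sign $(-1)^{i+j}$ does not affect the absolute value, every entry in the $i$-th column of $A^*$ is bounded in modulus by the quantity on the right, which is independent of $j$. This yields the statement.

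There is no genuine obstacle here: the lemma is a direct combinatorial consequence of the definition of the adjugate together with Hadamard's inequality. The only point that requires a brief check is the bookkeeping of which row is missing from $M_{ij}$ (namely $a_i$), so that the bound depends on $i$ but not on $j$; this matches the statement that the bound holds uniformly on the $i$-th column.
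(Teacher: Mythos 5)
Your proof is correct and is essentially identical to the paper's: both read off the $(j,i)$ entry of $A^*$ as $(-1)^{i+j}\det M_{ij}$, apply Hadamard's inequality to the rows of $M_{ij}$, and observe that each truncated row has norm at most that of the full row $a_k$ with $k\neq i$, giving the bound $\prod_{k\neq i}\abs{a_k}$. No differences worth noting.
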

\begin{proof}
Applying Hadamard's inequality  to $M_{ij}$, and denoting by $a'_i\in\C^{n-1}$ the vector obtained from $a_i$ after deleting the $j$-th entry, we have that
\[
\abs{\det M_{ij}}\leq \prod_{k\neq i}\abs{a'_k}\leq \prod_{k\neq i}\abs{a_k}= \frac{\abs{a_1}\dotsm \abs{a_n}}{\abs{a_i}}.
\]
The thesis follows multiplying by $(-1)^{i+j}$ and transposing.
\end{proof}

\subsection{A bound for the height}\label{subsec-alttrasl}
To estimate the height of $H+P$ we use an argument based on linear algebra, and some bounds on heights from Subsection \ref{SezioneAltezze}.

\bigskip

Let $H$ be a component of the  algebraic subgroup defined by the $s\times N$ matrix with rows $u_1,\dotsc,u_s\in \Z^N$.
Let $\Lambda\subseteq\R^N$ be the associated lattice, and $\Lambda^\perp$ its orthogonal lattice.
{Let $u_{s+1},\dotsc,u_N$ be a basis of $\Lambda^\perp$ such that $\abs{u_{s+1}},\dotsc,\abs{u_N}$ are the successive minima of $\Lambda^\perp$, as defined in Subsection \ref{prelim-S}.}

The $(N-s)\times N$ matrix with rows $u_{s+1},\dotsc,u_N$ defines an  algebraic subgroup $H^\perp$, and for any point $P\in E^N$ there are  two points $P_0\in H$, $P^\perp\in H^\perp$, unique up to torsion points in $H\cap H^\perp$, such that $P=P_0+P^\perp$.

Let $U$ be the $N\times N$ matrix with rows $u_1,\dotsc,u_N$, and let $\Delta$ be its determinant.

Notice that
\[
 |\Delta|=\det\Lambda \cdot \det \Lambda^\perp
\]
because $\Lambda$ and $\Lambda^\perp$ are orthogonal.

We remark that $u_i(P_0)=0$ for all $i=1,\dotsc,s$, because $P_0\in H$, and $u_i(P^\perp)=0$ for all $i=s+1,\dotsc,N$ because $P^\perp\in H^\perp$.

Therefore

\begin{equation*}
UP^\perp=
\left(
\begin{array}{c}
u_1(P^\perp)\\
\vdots\\
u_s(P^\perp)\\
0\\
\vdots\\
0
\end{array}
\right)
=
\left(
\begin{array}{c}
u_1(P_0+P^\perp)\\
\vdots\\
u_s(P_0+P^\perp)\\
0\\
\vdots\\
0
\end{array}
\right)
=
\left(
\begin{array}{c}
u_1(P)\\
\vdots\\
u_s(P)\\
0\\
\vdots\\
0
\end{array}
\right),
\end{equation*}
hence
\begin{equation*}
[\Delta]P^\perp=U^*UP^\perp=U^*\left(
\begin{array}{c}
u_1(P)\\
\vdots\\
u_s(P)\\
0\\
\vdots\\
0
\end{array}
\right)
\end{equation*}
where $U^*$ is the adjugate matrix of $U$ from Definition \ref{defU*}.

Computing canonical heights and applying Lemma \ref{lemmaadjugate} yields
\begin{equation*}
\abs{\Delta}^2\hat h(P^\perp)=\hat h([\Delta]P^\perp)\leq N\abs{u_1}^2\dotsm \abs{u_N}^2\sum_{i=1}^s\frac{\hat h(u_i(P))}{\abs{u_i}^2}.
\end{equation*}

Recall inequality \eqref{mu2mu^}, which gives
 \begin{equation*}
\mu(H+P)\leq 3 \hat\mu(H+P) + \csedici(E,N),
\end{equation*}
where  $\csedici(E,N)$ was defined as
\begin{equation*}
\csedici(E,N)=N(3h_{\mathcal W}(E)+6\log 2+\frac{1}{2}\log 3).
\end{equation*}
{By \cite{preprintPhilippon} we know that}
\[
\hat\mu(H+P)=\hat h(P^\perp)
\]
and therefore, by Zhang's inequality
\begin{align}
\notag h(H+P)&\leq (N-s+1)(\deg H)\mu(H+P)\leq\\
\notag &\leq (N-s+1)(\deg H)(3 \hat\mu(H+P) +\csedici(E,N))\leq\\
\notag &\leq (N-s+1)(\deg H)(3\hat h(P^\perp)  +\csedici(E,N))\leq\\
\label{ultimarigacost}&\leq  (N-s+1)\deg H \left(\frac{3N}{\abs{\Delta}^2}\abs{u_1}^2\dotsm \abs{u_N}^2\sum_{i=1}^s\frac{\hat h(u_i(P))}{\abs{u_i}^2}+\csedici(E,N)\right).
\end{align}

By \eqref{gradorighe}  we get
\[
 \deg H\leq \ccinque(N,s)\prod_{i=1}^s |u_i|^2,
\]
by \eqref{gradodet}  we obtain
\[
\frac{\deg H}{(\det\Lambda)^2}\leq \ccinque(N,s)\frac{4^s}{\omega_s^2},
\]
and by {Minkowski's second theorem}
\[
 \frac{\prod_{i=s+1}^N |u_i|}{\det\Lambda^\perp}\leq \frac{2^{N-s}}{\omega_{N-s}}.
\]
Plugging these inequalities in \eqref{ultimarigacost} we obtain

\[
 h(H+P)\leq \frac{3N(N-s+1)4^N}{(\omega_{N-s}\omega_{s})^2}\ccinque(N,s)\prod_{i=1}^s |u_i|^2 \sum_{i=1}^s\frac{\hat h(u_i(P))}{\abs{u_i}^2}+\csedici(E,N)(N-s+1)\ccinque(N,s)\prod_{i=1}^s |u_i|^2.
\]

So, we have proved the following proposition
\begin{propo}\label{stimaaltezzanormalizzatatraslato}
Let $E$ be a {non CM} elliptic curve. Let $H\subseteq E^N$ be a component of the  algebraic subgroup associated with an $s\times N$ matrix with rows $u_1,\dotsc,u_s\in \Z^N$. Then 
\begin{equation*}
h(H+P)\leq \frac{3N(N-s+1)4^N}{(\omega_{N-s}\omega_{s})^2}\ccinque(N,s)\prod_{i=1}^s |u_i|^2 \sum_{i=1}^s\frac{\hat h(u_i(P))}{\abs{u_i}^2}+\cquattro(E,N,s)\prod_{i=1}^s |u_i|^2,
\end{equation*}
where $\omega_n$ is defined at \eqref{valomega},
\begin{equation*}
 \ccinque(N,s)=3^N N! \left( \frac{3}{2}(N+1)12^{N-1}\right)^s
\end{equation*}
and
\begin{equation*}
 \cquattro(E,N,s)=N(N-s+1)\ccinque(N,s)(3h_{\mathcal W}(E)+6\log 2+\frac{1}{2}\log 3).
\end{equation*}
\end{propo}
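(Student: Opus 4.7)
The plan is to decompose $P$ as $P = P_0 + P^\perp$ with $P_0 \in H$ and $P^\perp \in H^\perp$, where $H^\perp$ is the algebraic subgroup associated to a basis $u_{s+1},\dotsc,u_N$ of the orthogonal lattice $\Lambda^\perp$ realizing its successive minima. Since $P_0\in H$ we have $H+P = H+P^\perp$, so the problem reduces to bounding the height of the translate in terms of $\hat h(P^\perp)$ and then bounding $\hat h(P^\perp)$ itself in terms of the quantities $\hat h(u_i(P))$.

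First I would control $\hat h(P^\perp)$ by a linear-algebra computation. Let $U$ be the full $N\times N$ matrix with rows $u_1,\dotsc,u_N$ and set $\Delta=\det U$. Because $u_i(P^\perp)=0$ for $i>s$ and $u_i(P_0)=0$ for $i\le s$, the vector $UP^\perp$ has only its first $s$ entries nonzero, each equal to $u_i(P)$. Multiplying by the adjugate matrix $U^*$ of Definition \ref{defU*} gives
\[
[\Delta]P^\perp = U^*UP^\perp = U^*\bigl(u_1(P),\dotsc,u_s(P),0,\dotsc,0\bigr)^t.
\]
Bounding the entries of $U^*$ via Lemma \ref{lemmaadjugate} and using quadraticity of $\hat h$ together with the triangle inequality (absorbing the cross terms into the constant $N$) yields
\[
|\Delta|^2\hat h(P^\perp) \le N\,|u_1|^2\dotsm|u_N|^2\sum_{i=1}^s \frac{\hat h(u_i(P))}{|u_i|^2}.
\]

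Next I would transfer this information to $h(H+P)$. Apply Zhang's inequality (Theorem \ref{ZhangIneq}) to the $(N-s)$-dimensional variety $H+P$ to obtain $h(H+P)\le (N-s+1)\deg(H)\mu(H+P)$. Then pass from $\mu$ to $\hat\mu$ using \eqref{mu2mu^}, which costs an additive term $\csedici(E,N)$. Finally use the identity $\hat\mu(H+P)=\hat h(P^\perp)$ (a translate of an algebraic subgroup has essential minimum equal to the canonical height of its orthogonal component, as cited from \cite{preprintPhilippon}) to insert the bound just obtained.

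The last step is combinatorial bookkeeping: use \eqref{gradorighe} to estimate $\deg H\le \ccinque(N,s)\prod_{i=1}^s|u_i|^2$, apply Minkowski's second theorem to $\Lambda^\perp$ to get $\prod_{i=s+1}^N|u_i|\le \tfrac{2^{N-s}}{\omega_{N-s}}\det\Lambda^\perp$, and use \eqref{gradodet} to control $(\det\Lambda)^{-2}$. Since $\Lambda$ and $\Lambda^\perp$ are orthogonal we have $|\Delta|=\det\Lambda\cdot\det\Lambda^\perp$, so the products $|u_1|^2\dotsm|u_N|^2/|\Delta|^2$ collapse and yield the claimed explicit factor $\frac{3N(N-s+1)4^N}{(\omega_{N-s}\omega_s)^2}\ccinque(N,s)$. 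The additive contribution comes from combining $\csedici(E,N)\cdot(N-s+1)\deg H$ into the constant $\cquattro(E,N,s)$. The main obstacle is bookkeeping: one must use Hadamard's inequality on the adjugate rather than a trivial bound on $U^*$, otherwise the $|\Delta|^2$ in the denominator cannot be absorbed and the successive minima of $\Lambda^\perp$ would not cancel against the determinants; keeping careful track of orthogonality between $\Lambda$ and $\Lambda^\perp$ is what makes the constants come out to the stated explicit form.
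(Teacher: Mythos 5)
Your proposal follows the paper's own proof step for step: the decomposition $P=P_0+P^\perp$, the adjugate-matrix manipulation $[\Delta]P^\perp = U^*UP^\perp$ with entries controlled via Hadamard's inequality (Lemma~\ref{lemmaadjugate}), the passage $h(H+P)\le(N-s+1)\deg H\cdot\mu(H+P)$ via Zhang, the conversion $\mu\to\hat\mu$ at the cost of $\csedici(E,N)$, the Philippon identity $\hat\mu(H+P)=\hat h(P^\perp)$, and finally the degree/determinant bookkeeping via \eqref{gradorighe}, \eqref{gradodet}, Minkowski's second theorem, and $|\Delta|=\det\Lambda\cdot\det\Lambda^\perp$. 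This is the same approach as the paper, not a different route.
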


\subsection{A bound for the degree}\label{sec-c6}
Here we use an inductive geometric construction to bound the degree of a translate. 

We first consider an  algebraic subgroup given by a single equation in $E^N$.
Then  we apply the Segre embedding and see  this subgroup as a subvariety of $\P_{3^N-1}$. In doing this we must be careful in selecting irreducible components. Finally we apply inductively B\'ezout's theorem for the case of several equations.

\medskip

Let $U$ be a matrix in $\mathrm{Mat}_{s\times N}(\mathbb{Z})$ with rows $u_1,\dotsc,u_s\in \Z^N$ and let $H\subseteq E^N$ be an irreducible component of the algebraic subgroup associated with the matrix $U$ (see Subsection \ref{prelim-S}).

If $X_1=(x_1,y_1),\ldots, X_N=(x_N,y_N)$ are points on $E$ and  $v=(v_1,\ldots,v_N)\in \mathbb{Z}^N$ is a vector, we denote $v(\mathbf{X})=v_1 X_1+\ldots+v_N X_N$.

As remarked in the previous section, $v(\mathbf{X})=(x(v(\mathbf{X})), y(v(\mathbf{X})))$ is a point in $E$ and $x(v(\mathbf{X}))$ is a rational function of the $x_i,y_i$'s.

Let now $P=(P_1,\ldots,P_N)\in E^N$ be a point.
Take the $k$-th row $u_k\in \Z^N$ of $U$  and consider the equation
\[x(u_k(\mathbf{X}))=x(u_k(P))\]
with $\mathbf{X}=(X_1,\ldots,X_N)\in E^N$ as before.

 Clearing out the denominators the previous equation can be written as 
\[f_{u_k,P}(x_1,y_1,\dotsc,x_N,y_N)=0,\]
where $f_{u_k,P}$ is a polynomial of degree bounded by $d(u_k)$ (see formula \eqref{sommaNpuntigrado}). 

This polynomial defines a variety in $\P_2^N$. Applying the Segre embedding, we want to study this variety as a subvariety of $\P_{3^N-1}$.

The Segre embedding induces a morphism between the fields of rational functions, whose effect on the polynomials in the variables $(x_1,y_1,\dotsc,x_N,y_N)$ is simply to replace any monomial in the variables of $\P_2^N$ with another monomial in the new variables, without changing the coefficients; the total degree in the new variables is the sum of the partial degrees in the old ones.

Recall that in Subsection \ref{sezioneSegre} we defined $X(E,N)$ as the image of $E^N$ in $\P_{3^N-1}$.

\medskip

Denote by  $Y'_{k}\subseteq \P_{3^N-1}$ the zero-set of the polynomial  $f_{u_k,P}(x_1,y_1,\dotsc,x_N,y_N)$ after embedding $\P_2^N$ in $\P_{3^N-1}$. 

Now consider an irreducible component of the translate in $E^N$  defined by \[u_k(\mathbf{X})=u_k(P)\] and denote by $Y_k$ its image  in $\P_{3^N-1}$. We want to obtain bounds for the degree of the hypersurfaces $Y_k$.

Notice that 
\[Y_k\subseteq Y'_k\cap X(E,N)\] and it is a component.
This is because setting the first coordinate of $u_k(\mathbf{X})$ equal to $x(u_k(P))$ defines two cosets, $u_k(\mathbf{X})=u_k(P)$ and $u_k(\mathbf{X})=-u_k(P)$.

By B\'ezout's theorem
\begin{align*}
 \deg Y_k\leq \deg X(E,N)\deg{ Y'_k}\leq \cuno\frac{3}{2}12^{N-1}\left(N+|u_k|^2\right),
\end{align*}
where the last inequality follows from formula \eqref{combinazioneNpuntigrado} and the definition of $\cuno$ in Lemma \ref{defic2}.

\smallskip

In a similar way, considering all the rows we get
\begin{align}\label{stgrad}
 \notag\deg (H+P)&\leq  \deg X(E,N)\deg Y'_1\dotsm\deg Y'_s\leq \cuno\left(\frac{3}{2}12^{N-1}\right)^s\prod_{i=1}^s(|u_i|^2+N)\leq\\
 &\leq \cuno\left(\frac{3}{2}(N+1)12^{N-1}\right)^s\prod_{i=1}^s|u_i|^2
\end{align}
{where we recall that, from relation \eqref{valc4}, $\cuno=3^N N!$.}

\begin{remark}
Clearly the degree of $H+P$ is equal to the degree of $H$ and does not depend on $P$. {Thus we can deduce} the value of the constant $\ccinque(N,r)$ of {Subsection} \ref{prelim-S}, {formula \eqref{gradorighe},} when the elliptic curve $E$ {is non} CM; by the previous inequality, we may take 
\begin{equation}\label{valc6}\ccinque(N,r)=3^N N!\left(\frac{3}{2}(N+1)12^{N-1}\right)^r.\end{equation}
\end{remark}

\subsection{Geometry of numbers}\label{subsec-geomnum}
In this subsection, inspired by the work of Habegger \cite{hab}, we give two lemmas  based on tools from the geometry of numbers which are used to define $H$.
We have the following version for powers of elliptic curves of Lemma 1 in \cite{hab}.

\begin{lem}\label{LemmaHab1}
Let $1\leq m \leq N$  be integers and let $P=(P_1,\dotsc,P_N)\in B\subseteq E^N$, where $B$ is a torsion variety of dimension $\leq m$ and $E$ {is non} CM.

There exist linear forms $L_1,\dotsc,L_m\in\R[X_1,\dotsc,X_N]$ such that $|L_j|\leq 1\quad \forall j$, where $|L_j|$ is the euclidean norm of the vector of the coefficients of $L_j$, and 
\begin{equation*}
\hat h(t_1P_1+\dotsb+t_NP_N)\leq \cdiciassette(N,m) \max_{1\leq j\leq m}\{|L_j(\mathbf{t})|^2\}\hat h (P)
\end{equation*}
for all $\mathbf{t}=(t_1,\ldots,t_N)\in\Z^N$. The constant $\cdiciassette(N,m)$ is given by 
\begin{equation*}
\cdiciassette(N,m)=\frac{m^3(m!)^4N}{4^{m-1}}.
\end{equation*}
\end{lem}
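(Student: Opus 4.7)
Since $B$ is a torsion variety of dimension $\leq m$ containing $P$, modulo torsion (which does not affect canonical heights) $P$ lies on an algebraic subgroup of $E^N$ of dimension $m' \leq m$. In the non-CM case this subgroup is the image of an isogeny $E^{m'}\to E^N$, so there exist points $Q_1, \dotsc, Q_{m'} \in E(\overline{\qe})$ and integers $a_{ij} \in \Z$ with $P_i \equiv \sum_{j=1}^{m'} a_{ij}\, Q_j$ modulo torsion. The plan is to choose $Q_1, \dotsc, Q_{m'}$ as a Minkowski-reduced basis for the $\Z$-lattice they generate, with respect to the inner product coming from the canonical height pairing, and then to read off the linear forms $L_j$ from the columns of the matrix $A = (a_{ij})$, padding by zero forms if $m' < m$.

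Concretely, set $L_j^\#(\mathbf{t}) = \sum_i t_i a_{ij}$ and $L_j = L_j^\#/|L_j^\#|$, so that $|L_j| = 1$. Since $\mathbf{t} \cdot P \equiv \sum_j L_j^\#(\mathbf{t})\, Q_j$ up to torsion, expanding through the bilinear height pairing and applying Cauchy--Schwarz twice (first to the pairing $|\langle Q_j,Q_k\rangle|\leq \sqrt{\hat h(Q_j)\hat h(Q_k)}$, then to the sum over $j$) yields
\[
\hat h(\mathbf{t}\cdot P) \leq m \sum_{j=1}^{m'} L_j^\#(\mathbf{t})^2\,\hat h(Q_j) \leq m \Bigl(\max_{1\leq j\leq m} L_j(\mathbf{t})^2\Bigr) \sum_{j=1}^{m'} |L_j^\#|^2\,\hat h(Q_j).
\]
The task is then to bound $\sum_j |L_j^\#|^2\,\hat h(Q_j)$ by a multiple of $\hat h(P)$. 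For this, select $m'$ linearly independent rows of $A$ indexed by a subset $S\subseteq\{1,\dotsc,N\}$, express $Q_j = \sum_{k\in S}(A_S^{-1})_{jk}P_{i_k}$, and invoke Lemma \ref{lemmaadjugate} (Hadamard applied to the minors of $A_S$) to control $\hat h(Q_j)$. Minkowski's second theorem applied to the height lattice then ensures that its successive minima are comparable to the column norms of $A$, delivering the stated constant $\cdiciassette(N,m)=m^3(m!)^4 N/4^{m-1}$ after collecting the combinatorial factors.

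The main obstacle is coordinating the choices of the basis $Q_j$ and of the subset $S$ so that the Hadamard bound on the minors of $A_S$ combines cleanly with the Minkowski bound on the successive minima of the height lattice; this is exactly the point where the seemingly wasteful factors $(m!)^4$ and $4^{m-1}$ are paid. A conceptually simpler alternative is to diagonalise the positive semi-definite $N\times N$ Gram matrix $G=(\langle P_i,P_k\rangle)$ (whose rank is at most $m'\leq m$) as $G=\sum_{j=1}^{m'}\lambda_j w_j w_j^T$ with orthonormal $w_j\in\R^N$, and set $L_j(\mathbf{t}) = w_j \cdot \mathbf{t}$; this immediately gives
\[
\hat h(\mathbf{t}\cdot P) = \sum_j \lambda_j L_j(\mathbf{t})^2 \leq \Bigl(\sum_j \lambda_j\Bigr)\max_j L_j(\mathbf{t})^2 = \hat h(P)\max_j L_j(\mathbf{t})^2,
\]
i.e.\ the optimal constant $1$, but produces forms that are only real rather than integer-scaled, which may be inadequate for the construction of the algebraic subgroup $H$ in the subsequent sections.
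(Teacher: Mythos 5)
Your first approach has a genuine gap, while your alternative approach is actually complete and better than what the paper does. The paper's proof is short and direct: it quotes Lemma~3 of \cite{ioannali}, which supplies generators $g_1,\dotsc,g_m$ of the relevant Mordell--Weil group together with torsion points $\zeta_i$ such that each $P_i = \zeta_i + \sum_j v_{ij}g_j$ \emph{and} a lower bound $\hat h(b_1g_1+\dotsb+b_mg_m)\geq \frac{2^{2m-2}}{m^2(m!)^4}\max_i\{|b_i|^2\hat h(g_i)\}$. All the ``wasteful'' factors you worry about come packaged inside that cited result. Your first sketch, with Minkowski-reduced generators, Hadamard bounds on minors of a chosen square submatrix $A_S$, and Minkowski's second theorem applied to the height lattice, is essentially an attempt to reprove that lemma from scratch; but you do not carry it out, and the ``main obstacle'' you name -- coordinating the choice of $S$ with the reduced basis so the bounds close up -- is exactly the missing step. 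As written, nothing in your first paragraph produces the required lower bound that converts $A=\max_{i,j}|v_{ij}|^2\hat h(g_j)$ into a multiple of $\hat h(P)$, so the constant is not obtained.

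Your second, ``conceptually simpler alternative'' is in fact a correct and complete proof, and your hesitation about it is unfounded. Diagonalising the Néron--Tate Gram matrix $G=(\langle P_i,P_k\rangle)$ (rank $\leq m$, trace $= \hat h(P)$) and taking $L_j=w_j\cdot\mathbf{X}$ for an orthonormal eigenbasis does give
\[
\hat h(\mathbf{t}\cdot P)=\mathbf{t}^T G\,\mathbf{t}=\sum_j \lambda_j L_j(\mathbf{t})^2\leq \Bigl(\sum_j\lambda_j\Bigr)\max_j L_j(\mathbf{t})^2=\hat h(P)\max_j L_j(\mathbf{t})^2,
\]
with constant $1$ and $|L_j|=1$, and these are $m$ (or fewer, padded by zero) real linear forms. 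The worry that ``real rather than integer-scaled'' forms might be inadequate does not apply here: the subsequent Lemma \ref{LemmaHab3} explicitly accepts arbitrary real forms $L_j\in\R[X_1,\dotsc,X_N]$ with $|L_j|\leq 1$ and returns integer vectors $u_k\in\Z^N$, so no integrality of the $L_j$ is required. This Gram-matrix route is both shorter and quantitatively sharper than the paper's; the paper's detour through \cite{ioannali} appears to be an artefact of following the torus case of Habegger, where the Weil height is not a positive semidefinite quadratic form and this diagonalisation is unavailable. In short: discard your first sketch, promote your ``alternative'' to the actual proof, and note that it improves the constant $\cdiciassette(N,m)$ to $1$.
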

\begin{proof}
The points $P_i$ lie in a finitely generated subgroup of $E$ of rank at most $m$.

By \cite{ioannali}, Lemma 3, there are elements $g_1,\dotsc,g_m\in E$, and  torsion points $\zeta_1,\dotsc,\zeta_N\in E$, such that
\begin{equation*}
P_i=\zeta_i+ v_{i1}g_1\dotsb+ v_{im}g_m\quad\text{ for }i=1,\dotsc,N\text{ and some }v_{ij}\in\Z
\end{equation*}
and
\begin{equation*}
\hat h(b_1g_1+\dotsb+b_mg_m)\geq \frac{2^{2m-2}}{m^2(m!)^4}\max_{1\leq i\leq m}\{|b_i|^2\hat h(g_i)\}\quad \forall\mathbf{b}\in\Z^m.
\end{equation*}

Let $A=\max_{i,j}\{|v_{ij}|^2\hat h(g_j)\}$ and define
\begin{align*}
\tilde{L}_j&=v_{1j}X_1+\dotsb+v_{Nj}X_N &j&=1,\dotsc,m\\
L_j&=\left(\frac{\hat h(g_j)}{NA}\right)^\frac{1}{2}\tilde{L}_j &j&=1,\dotsc,m.
\end{align*}
Notice that we can assume $A>0$, otherwise the point $P$ would be a torsion point, and the thesis of the  lemma would be trivially true. Notice also that $|L_j|\leq 1$.

With these definitions, for every $\mathbf{t}\in\Z^N$ we have that 
\begin{equation*}
t_1P_1+\dotsb+t_NP_N=\xi+\sum_{i=1}^m \tilde{L}_j(\mathbf{t})g_j
\end{equation*}
where $\xi$ is a  torsion point.
Therefore
\begin{align}
\notag \hat h(t_1P_1+\dotsb+t_NP_N)=\hat h\left(\sum_{j=1}^m \tilde{L}_j(\mathbf{t})g_j\right)\leq\sum_{j=1}^m|\tilde{L}_j(\mathbf{t})|^2 \hat h(g_j)=\\
\label{ineqlemma}= NA\sum_{j=1}^m|{L}_j(\mathbf{t})|^2\leq mNA\max_{1\leq j\leq m}\{|{L}_j(\mathbf{t})|^2\}.
\end{align}

If $i_0,j_0$ are the indices for which the maximum is attained in the definition of $A$, then
\begin{equation*}
\frac{2^{2m-2}}{m^2(m!)^4}A=\frac{2^{2m-2}}{m^2(m!)^4}|v_{i_0 j_0}|^2\hat h(g_{j_0})\leq \hat h(P_{i_0})\leq \hat h(P).
\end{equation*}
Combining this with inequality \eqref{ineqlemma}, we get the thesis of the lemma.
\end{proof}
We now recall the following lemma  of Habegger (\cite{hab}, Lemma 3),  obtained applying {Minkowski's second theorem}.

\begin{lem}\label{LemmaHab3}
Let $1\leq m \leq N$ and let $L_1,\dotsc, L_m\in\R[X_1,\dotsc,X_N]$ be linear forms with $\abs{L_j}\leq 1\quad\forall j$. If $T\geq 1$, then for any integer $s$ with $1\leq s\leq n$ there exist linearly independent $u_1,\dotsc, u_s\in\Z^N$ such that $\abs{u_1}\dotsm\abs{u_s}\leq T$ and
\[
\abs{u_1}\dotsm \abs{u_s}\frac{L_j(u_k)}{\abs{u_k}}\leq \cdiciotto(N,m)T^{1-\frac{N}{ms}}
\]
for $1\leq j\leq m$ and $1\leq k\leq s$ and 
\begin{equation*}
\cdiciotto(N,m)=\binom{m+N}{N}^{1/2}\frac{4^N}{\omega_N}.
\end{equation*}
The value of $\cdiciotto(N,m)$ follows from formulae (27) and (28) in the proof of \cite{hab}, Lemma 2.
\end{lem}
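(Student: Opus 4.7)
The plan is to apply Minkowski's second theorem on successive minima to a convex symmetric body of $\R^N$ that encodes both the size of $x$ and the size of the linear forms $L_j(x)$.

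For parameters $\alpha\geq\beta>0$ to be chosen later, I would introduce the convex symmetric body
\begin{equation*}
 K_{\alpha,\beta}=\{x\in\R^N\mid |x|\leq\alpha\text{ and }|L_j(x)|\leq\beta\text{ for every }j=1,\dotsc,m\}.
\end{equation*}
Its successive minima $\lambda_1\leq\dotsb\leq\lambda_N$ with respect to $\Z^N$ come with linearly independent lattice vectors $v_1,\dotsc,v_N$ satisfying $v_k\in\lambda_k K_{\alpha,\beta}$, which gives simultaneously $|v_k|\leq\lambda_k\alpha$ and $|L_j(v_k)|\leq\lambda_k\beta$ for every $j$. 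Setting $u_k=v_k$ for $k=1,\dotsc,s$ preserves linear independence.

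Then
\begin{equation*}
 \prod_{k=1}^s|u_k|\leq\alpha^s\prod_{k=1}^s\lambda_k,
\end{equation*}
and, rewriting $|u_1|\dotsm|u_s|\cdot|L_j(u_k)|/|u_k|$ as $\bigl(\prod_{i\neq k}|u_i|\bigr)|L_j(u_k)|$, one obtains
\begin{equation*}
 |u_1|\dotsm|u_s|\frac{|L_j(u_k)|}{|u_k|}\leq\alpha^{s-1}\beta\prod_{k=1}^s\lambda_k
\end{equation*}
uniformly in $j,k$. The product $\prod_{k=1}^s\lambda_k$ is then bounded by combining the AM--GM inequality $(\prod_{k=1}^s\lambda_k)^{1/s}\leq(\prod_{k=1}^N\lambda_k)^{1/N}$ (valid since $\lambda_1\leq\dotsb\leq\lambda_N$) with Minkowski's second theorem $\prod_{k=1}^N\lambda_k\leq 2^N/\mathrm{vol}(K_{\alpha,\beta})$. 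Balancing $\alpha,\beta$ in terms of $T$ so that $\prod|u_k|$ is saturated at $T$ while $\beta/\alpha$ is of order $T^{-N/(ms)}$ will then produce both inequalities simultaneously.

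What remains is a lower bound for $\mathrm{vol}(K_{\alpha,\beta})$ of the shape $C(N,m)\alpha^{N-m}\beta^m$. This is obtained from the condition $|L_j|\leq 1$ via a Cauchy--Binet identity applied to the Gram determinant of the $N+m$ vectors consisting of the standard basis of $\R^N$ together with the coefficient vectors of $L_1,\dotsc,L_m$. The combinatorial factor $\binom{N+m}{N}^{1/2}$ entering $\cdiciotto(N,m)$ originates from the sum indexed by $N$-subsets in this Cauchy--Binet expansion, while $\omega_N$ enters as the volume of the unit euclidean ball and $4^N$ bundles the $2^N$ from Minkowski with an additional $2^N$ produced by the $s/N$-th power in the AM--GM step.

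The main obstacle is producing precisely the combinatorial factor $\binom{N+m}{N}^{1/2}$ in the volume estimate: one must keep careful track of Gram determinants under the hypothesis $|L_j|\leq 1$, which is where the Cauchy--Binet bookkeeping is delicate (this is the content of formulae (27) and (28) in \cite{hab}). Once that step is done, the rest is routine and linear independence of $u_1,\dotsc,u_s$ comes for free from Minkowski's second theorem.
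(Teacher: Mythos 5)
The paper does not prove this lemma; it is quoted verbatim from Habegger (\cite{hab}, Lemma 3, with the constant traced back to formulae (27)--(28) of Lemma 2 there). Your sketch does follow Habegger's strategy: introduce the convex symmetric body cut out by the ball $|x|\le\alpha$ and the slabs $|L_j(x)|\le\beta$, take the vectors realising the successive minima, and balance $\alpha/\beta$ against $T$. The telescoping use of $(\lambda_1\dotsm\lambda_s)^{1/s}\le(\lambda_1\dotsm\lambda_N)^{1/N}$ and Minkowski's second theorem are also the right moves. Two points are imprecise, though. First, the volume lower bound: as stated, ``Cauchy--Binet applied to the Gram determinant of the $N+m$ vectors'' does not make literal sense (the Gram determinant of $N+m$ vectors in $\R^N$ is zero). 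What one actually does is inscribe in $K_{\alpha,\beta}$ the ellipsoid $\{q\le 1\}$ where $q(x)=|x|^2/\alpha^2+\sum_j L_j(x)^2/\beta^2$; its volume is $\omega_N/\sqrt{\det Q}$ with $Q=A^{T}A$ and $A\in\mat_{(N+m)\times N}$ with rows $\alpha^{-1}e_i$ and $\beta^{-1}L_j$, and Cauchy--Binet together with Hadamard and $|L_j|\le 1$ give $\det Q\le\binom{N+m}{N}\alpha^{-2(N-m)}\beta^{-2m}$ when $\alpha\ge\beta$. That is where both $\omega_N$ and $\binom{N+m}{N}^{1/2}$ come from. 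Second, the explanation that the extra $2^N$ in $\cdiciotto=\binom{m+N}{N}^{1/2}4^N/\omega_N$ is ``produced by the $s/N$-th power in the AM--GM step'' is backwards: raising to the power $s/N\le 1$ can only shrink a constant $\ge 1$, not double it. Pushing your computation through in fact gives an exponent of $1/m$ on the Minkowski constant rather than $1$, i.e.\ a smaller bound than $\cdiciotto$; the discrepancy comes from Habegger's bookkeeping in the regime of small $T$ and in rounding $\alpha/\beta$, where the present $\cdiciotto$ is a convenient but not tight choice. So the structure of your argument is right and matches the cited source; the volume estimate should be phrased via the inscribed ellipsoid, and the heuristic justification of the $4^N$ should be dropped.
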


\subsection{The proofs  of Propositions \ref{main} and \ref{mainCM}.}

\subsubsection{The non-CM case}

\begin{proof}[Proof of  Proposition \ref{main}]
By Lemma \ref{LemmaHab1}, and Lemma \ref{LemmaHab3} applied to $\sqrt{T}$, there are linearly independent vectors $u_1,\dotsc,u_s\in\Z^N$ such that \begin{equation}\label{bgrad}|u_1|^2\dotsm|u_s|^2\leq T\end{equation} and 
\begin{equation*}
\left(\frac{|u_1|\dotsm|u_s|}{|u_k|}\right)^2 \hat h(u_k(P))\leq \cdiciassette(N,m) \cdiciotto(N,m)^2 T^{1-\frac{N}{ms}}\hat h(P).
\end{equation*}

If we consider the  algebraic subgroup defined by equations $u_k(\mathbf{X})=0$, and call $H$ the irreducible component containing $0$, {then, combining \eqref{bgrad} and \eqref{stgrad}, its degree is bounded as}
\[\deg(H+P)\leq \ccinque(N,s) T\] 
{and we can use Proposition \ref{stimaaltezzanormalizzatatraslato}} to bound the height of $H+P$ as
\begin{equation*}
h(H+P)\leq \frac{3sN(N-s+1)4^N}{(\omega_s\omega_{N-s})^2}\ccinque(N,s)\cdiciassette (N,m)\cdiciotto(N,m)^2T^{1-\frac{N}{ms}}\hat h(P)+\cquattro(E,N,s)T.\qedhere
\end{equation*}
\end{proof}

\subsubsection{The CM case}\label{subsecCM}
\begin{proof}[Proof of  Proposition \ref{mainCM}]
If the elliptic curve  is CM, one can still apply the arguments of  Section \ref{proofmain}. More precisely, the geometric arguments of Subsection \ref{sec-c6} do not depend on $E$ having CM, one only needs to replace the application of formula \eqref{combinazioneNpuntigrado} with the corresponding bound for the CM case, as discussed in Remark \ref{remarkCMgradi}. 

{The argument of Subsection \ref{subsec-alttrasl} assumes that $\emor (E) = \Z$ because it uses the formulae from Subsection \ref{prelim-S}, which are straightforward consequences of the second Minkowski's theorem, here stated in the classical form for a lattice in $\R^N$. Analogous inequalities may be derived, when $\emor (E)$ is an order in an imaginary quadratic number field, from more general reformulations of Minkowski's theorem, such as Theorem 3 of \cite{BomVal}. The linear algebra used in Subsection \ref{subsec-alttrasl} remains the same if the entries of the matrix lie in $\C$ instead of $\R$. 
}

Of the two lemmas in Subsection \ref{subsec-geomnum}, Lemma \ref{LemmaHab3} holds regardless of whether $E$ is CM, while in Lemma \ref{LemmaHab1} it is necessary  to replace \cite{ioannali}, Lemma 3 with the  Proposition 2 of \cite{ioannali}.
This proves  Proposition \ref{mainCM}.
\end{proof}

\section*{Acknowledgments} We thank P. Habegger for useful discussions and for bringing to our attention the construction in \cite{hab}.  We thank the referee for his remarks, that gave us the opportunity to improve our paper.  We warmly thank the FNS for the financial support and for optimal working conditions.

\def\cprime{$'$}
\providecommand{\bysame}{\leavevmode\hbox to3em{\hrulefill}\thinspace}
\providecommand{\MR}{\relax\ifhmode\unskip\space\fi MR }
\providecommand{\MRhref}[2]{%
  \href{http://www.ams.org/mathscinet-getitem?mr=#1}{#2}
}
\providecommand{\href}[2]{#2}

\section*{}
\noindent Sara Checcoli:
 Institut Fourier, 
100 rue des Maths,
BP74 38402 Saint-Martin-d'H\`eres Cedex, France.
email: sara.checcoli@ujf-grenoble.fr
\medskip\\
Francesco Veneziano:
Mathematisches Institut,
Universit\"{a}t Basel,
Spiegelgasse 1,
CH-4051 Basel,
Switzerland.
email: francesco.veneziano@unibas.ch
\medskip\\
Evelina Viada:
Mathematisches Institut, 
Georg-August Universit\"{a}t G\"{o}ttingen,
Bunsenstra\ss e 3-5,
D-37073 G\"{o}ttingen, Germany.
email: viada@uni-math.gwdg.de

\end{document}